\theoremstyle{theorem}
\newtheorem{theorem}{Theorem}[section]
\newtheorem{lemma}[theorem]{Lemma}
\newtheorem{proposition}[theorem]{Proposition}
\newtheorem{corollary}[theorem]{Corollary}
\theoremstyle{definition}
\newtheorem{definition}[theorem]{Definition}
\newtheorem{remark}[theorem]{Remark}
\newtheorem{example}[theorem]{Example}
\providecommand{\abs}[1]{\left\lvert#1\right\rvert}
\providecommand{\norm}[1]{\left\lVert#1\right\rVert}
\providecommand{\OP}[1]{\operatorname{#1}}
\renewcommand{\hat}{\widehat}
\begin{document}

\author{Igor Uljarevic}
\title{Viterbo's Transfer Morphism for Symplectomorphisms}
\maketitle
\begin{center}\today \end{center}

\begin{abstract}
We construct an analogue of Viterbo’s transfer morphism for Floer homology of an automorphism of a Liouville domain. As an application we prove that the Dehn-Seidel twist along {\em any} Lagrangian sphere in a Liouville domain of dimension $\geqslant 4$ has infinite order in the symplectic mapping class group.
\end{abstract}

\section{Introduction}

In his 1999 paper \cite{MR1726235}, Viterbo constructed a morphism
\begin{equation}\label{eq:viterbotransfer}
SH_\ast(W_2)\to SH_\ast(W_1)
\end{equation}
associated to a codimension-0 embedding $W_1\to W_2$ of a Liouville domain into another Liouville domain that preserves the Liouville form. The map \eqref{eq:viterbotransfer}, called the transfer morphism, fits into the commutative diagram
\begin{center}
\begin{tikzcd}
{SH_\ast(W_2)} \arrow{r} &{SH_\ast(W_1)}\\
{H_{\ast+n}(W_2,\partial W_2)} \arrow{r} \arrow{u} &{H_{\ast+n}(W_1,\partial W_1)}\arrow{u}.
\end{tikzcd}
\end{center}
Here, $2n$ is the dimension of $W_2$ and the map 
\[H_{\ast+n}(W_2,\partial W_2)\to H_{\ast+n}(W_1,\partial W_1)\]
is the composition of the homomorphism induced by the inclusion $(W_2,\partial W_2)\hookrightarrow (W_2, W_2\setminus W_1)$ and the excision isomorphism
\[H_{\ast+n}(W_2,W_2\setminus W_1)\to H_{\ast+n}(W_1,\partial W_1).\]

The aim of this paper is to construct an analogue of Viterbo's transfer morphism for the groups $HF_\ast(\phi, a)$ associated to an exact
symplectomorphism $\phi$ of a Liouville domain and a slope $a$ (see
\cite{Uljarevic}). As an application we construct an asymptotic growth invariant for such symplectomorphisms. As a byproduct we prove that the Dehn-Seidel twist along {\em any} Lagrangian sphere in a Liouville domain of dimension $\geqslant 4$ is of infinite order in the symplectic mapping class group (see Corollary~\ref{cor:tauinf} below).

Let $a\leqslant a'$ be two admissible slopes (i.e. $a, a'$ are not periods of any
Reeb orbit on the boundary of the Liouville domain). There is a well defined continuation map
\[HF_\ast(\phi,a)\to HF_\ast(\phi,a').\]
Given $W_1$ and $W_2$ as above, an exact symplectomorphism $\phi$ of $W_1$ can also be seen as an exact symplectomorphism of $W_2.$ To avoid any ambiguity, we will write $HF_\ast(W_j,\phi,a)$ for the group $HF_\ast(\phi,a)$ when $\phi$ is seen as a symplectomorphism of $W_j,$ $j\in\{1,2\}.$

\begin{theorem}\label{thm:transfermorphism}
Let $W_1,W_2,\phi$ be as above, and let $a, b\in \mathbb{R}^+\cup\{\infty\} $ be positive admissible slopes (with respect to $W_2$ and $W_1,$ respectively ). Assume $a\leqslant b.$ Then, there exists a linear map
\[HF_\ast(W_2,\phi,a)\to HF_\ast(W_1,\phi,b),\]
called the \textbf{transfer morphism}, with the following properties. It coincides with the map \eqref{eq:viterbotransfer} for $a=b=\infty,$ and $\phi$ equal to the identity. Moreover, the diagram
\begin{equation}\label{eq:diagram}
\begin{tikzcd}
HF_\ast(W_2,\phi,a)\arrow{r}\arrow{d}& HF_\ast(W_1,\phi,b)\arrow{d}\\
HF_\ast(W_2,\phi,a')\arrow{r}&HF_\ast(W_1,\phi,b'),
\end{tikzcd}
\end{equation}
consisting of transfer morphisms and continuation maps, commutes for all  admissible slopes $a',b'\in\mathbb{R}^+\cup\{\infty\}$ such that $a\leqslant a',$ $b\leqslant b',$ and $a'\leqslant b'.$
\end{theorem}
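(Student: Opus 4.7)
The plan is to adapt Viterbo's original construction to the setting of symplectomorphism Floer homology by using a carefully shaped Hamiltonian on $\hat W_2$ whose fixed-point set splits, on the chain level, into pieces naturally indexed by $W_1$ and by its complement. Concretely, I would first construct a time-dependent Hamiltonian $H\colon S^1\times \hat W_2\to\mathbb{R}$, admissible for $\phi$, with the following two-tier profile: linear of slope $b$ in the Liouville radial coordinate on a thin collar of $\partial W_1$ inside $W_2\setminus W_1$, an essentially constant plateau in the bulk of $W_2\setminus W_1$, and linear of slope $a$ on the cylindrical end of $\hat W_2$. On $W_1$ itself $H$ is a small $\phi$-admissible Morse-type perturbation, chosen so that the restriction of $H$ to $\hat W_1$ is $\phi$-admissible of slope $b$, while $H$ on all of $\hat W_2$ is $\phi$-admissible of slope $a$. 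The hypothesis $a\leq b$ is the compatibility condition ensuring that such a monotone profile exists and that the two classes of orbits below are separated by the action filtration.

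I would then classify the twisted $1$-periodic orbits of $H$: by the shape of the profile they split into an \emph{inner} set contained in $W_1$ together with the collar of $\partial W_1$, and an \emph{outer} set lying near $\partial W_2$ and on the cylindrical end; the inner orbits are precisely the generators of $CF_\ast(W_1,\phi,b)$, and together with the outer orbits they generate $CF_\ast(W_2,\phi,a)$. The crucial step is to show that the outer orbits form a subcomplex of $CF_\ast(W_2,\phi,a)$, i.e.\ that no Floer cylinder counted by the differential runs from an outer asymptote to an inner one. This is established via the twisted maximum principle of \cite{Uljarevic} applied both on the collar of $\partial W_1$ and on the cylindrical end of $\hat W_2$: along any Floer cylinder the Liouville radial coordinate cannot attain an interior maximum in either region, and combined with the action separation this rules out the problematic mixed trajectories. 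The transfer morphism is then defined as the map induced on homology by the quotient projection $CF_\ast(W_2,\phi,a)\twoheadrightarrow CF_\ast(W_2,\phi,a)/\langle\text{outer}\rangle\cong CF_\ast(W_1,\phi,b)$.

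To obtain commutativity of~\eqref{eq:diagram}, I would interpolate between two admissible pairs $(a,b)$ and $(a',b')$ by a monotone homotopy of Hamiltonians preserving the two-tier shape; the same confinement argument then guarantees that continuation maps and transfer morphisms are induced by chain maps respecting the outer subcomplex, so the square commutes on homology. Independence of the chosen profile is the special case in which the slope data are kept fixed, and for $\phi=\mathrm{id}$ and $a=b=\infty$ the construction reduces to Viterbo's since $HF_\ast(W_j,\mathrm{id},\infty)\cong SH_\ast(W_j)$ and the Hamiltonian profile above is exactly the one used in \cite{MR1726235}. The main obstacle I anticipate is the confinement step: the twisted Floer equation for $\phi$ introduces terms absent in the classical Hamiltonian Floer setting, so the subharmonic function driving the maximum principle must be adapted accordingly, and the resulting estimate verified uniformly across the monotone homotopies used in the naturality argument. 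Once this confinement is secured, the remainder of the construction is formal.
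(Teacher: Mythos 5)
Your overall strategy — a stair-like Hamiltonian on $\hat W_2$, a split of the twisted orbits into an inner set (supported in $W_1$) and an outer set, the action filtration singling out the outer orbits as a subcomplex, a maximum-principle confinement argument identifying the quotient with $CF_\ast(W_1,\phi,b)$, and monotone homotopies for naturality and the $a=b=\infty$ reduction to Viterbo — is indeed the same route the paper takes. But there is a genuine gap in your claim that the hypothesis $a\leqslant b$ by itself guarantees the action separation between inner and outer orbits.

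The action of a twisted orbit sitting inside $W_1$ is shifted by the primitive $F_\phi$ (in the paper's notation the inner actions lie in $[A-\|F_\phi\|,\,A+\|F_\phi\|]$, where $A$ is the depth of the plateau on $W_1$), and the width of the plateau is constrained above by $b-a$ (and by $b-b_0$, with $b_0$ the largest Reeb period below $b$). So if $\|F_\phi\|_{\infty}$ or the support radius $\rho(W_1,\phi)$ is large compared with $b-a$, no stair-like profile achieves the separation: some inner orbits acquire negative action and mix with the outer complex, and the quotient map is no longer a chain map. The paper's resolution is Proposition~\ref{pro:shrinking}: conjugate $\phi$ by the Liouville flow to obtain a symplectically isotopic $\phi_t$ with $C(W_1,\phi_t)=e^{-t}C(W_1,\phi)$ arbitrarily small, build the transfer morphism for $\phi_t$, and transport it back via the isotopy isomorphism $I(\{\phi_t\})$ of \cite{Uljarevic} (Definition~\ref{def:TMstep2}), checking independence of the conjugation parameter. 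Your proposal never notices this obstruction and never invokes the shrinking/isotopy step, so as written it only establishes the transfer morphism under the unstated quantitative hypothesis $C(W_1,\phi)<\min\{b-b_0,b-a\}$.

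Two smaller omissions: the stair-like construction inherently needs $a<b$, so the boundary case $a=b$ requires an extra argument (the paper uses the invertibility of the continuation map on a small slope interval, Definition~\ref{def:TMstep3}); and the extension to $a=\infty$ or $b=\infty$ is by direct limit over finite slopes via Proposition~\ref{prop:TMcont} rather than by any single Hamiltonian. These are easier to fix than the shrinking issue, but they should be stated.
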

\subsection{Applications}
As an application, we construct a numerical invariant 
\[\kappa(W,\phi):=\limsup_{m\to \infty}\frac{\dim HF(W,\phi^m,\varepsilon)}{m}\] 
for an exact symplectomorphism $\phi$ of a Liouville domain $W,$ the so-called iterated ratio (see also Definition~\ref{def:iteratedratio}). It does not depend on the ambient Liouville domain in the following sense.

\begin{theorem}\label{thm:ambient}
Let $W_1$ and $W_2$ be Liouville domains as in Theorem~\ref{thm:transfermorphism} and let $\phi:W_1\to W_1$ be an exact symplectomorphism, then $\kappa(W_1,\phi)=\kappa(W_2,\phi).$
\end{theorem}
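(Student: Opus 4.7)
My plan is to reduce Theorem~\ref{thm:ambient} to the uniform estimate
\[
\bigl|\dim HF(W_2,\phi^m,\varepsilon)-\dim HF(W_1,\phi^m,\varepsilon)\bigr|\leqslant C,
\]
where $C$ depends only on the embedding $W_1\hookrightarrow W_2$ and on $\varepsilon$, but not on $m$. Dividing by $m$ and passing to $\limsup$ then forces $\kappa(W_1,\phi)=\kappa(W_2,\phi)$. The slope $\varepsilon>0$ can be fixed simultaneously admissible for $\partial W_1$ and $\partial W_2$, since each Reeb spectrum is discrete.

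To establish the estimate I would invoke Theorem~\ref{thm:transfermorphism} applied to the exact symplectomorphism $\phi^m$ with $a=b=\varepsilon$, giving a transfer morphism
\[
T_m\colon HF_\ast(W_2,\phi^m,\varepsilon)\longrightarrow HF_\ast(W_1,\phi^m,\varepsilon).
\]
Since $\dim HF(W_2,\phi^m,\varepsilon)-\dim HF(W_1,\phi^m,\varepsilon)=\dim\ker T_m-\dim\operatorname{coker}T_m$, it is enough to bound both $\dim\ker T_m$ and $\dim\operatorname{coker}T_m$ uniformly in $m$. The structural input is that $\phi$ equals the identity on $W_2\setminus W_1$: for a suitable choice of Floer data on $W_2$, the chain complex computing $HF_\ast(W_2,\phi^m,\varepsilon)$ should fit into a short exact sequence $0\to K\to C\to Q\to 0$ in which $K$ is generated by $1$-periodic orbits lying in $W_2\setminus W_1$, $Q$ computes $HF_\ast(W_1,\phi^m,\varepsilon)$, and $T_m$ is realised by the projection $C\to Q$. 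Because $\phi^m$ acts as the identity on $W_2\setminus W_1$, the generators of $K$ are $1$-periodic orbits of a small Hamiltonian perturbation of the identity and so, by Morse theory, their number is bounded in terms of $W_2\setminus W_1$ alone, independently of $m$. The long exact sequence then bounds $\dim\ker T_m$ and $\dim\operatorname{coker}T_m$ by $\dim H_\ast(K)\leqslant\dim K$, giving the desired constant $C$.

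The main obstacle is setting up this chain-level splitting rigorously. One needs to ensure that no Floer trajectory for the chosen data connects a generator in $W_2\setminus W_1$ to a generator in $W_1$ in the wrong direction; equivalently, trajectories must be confined across $\partial W_1$. I expect this to follow from a maximum-principle argument of exactly the sort that underlies the construction of the transfer morphism in Theorem~\ref{thm:transfermorphism}: pick a Hamiltonian on $W_2$ whose profile near $\partial W_1$ is linear with slope $\varepsilon$, so that the forbidden crossings are ruled out by the integrated Liouville identity. Once such a filtration is in place the rest of the argument is formal, and combined with the bound $\dim H_\ast(K)\leqslant\dim K$ it yields Theorem~\ref{thm:ambient}.
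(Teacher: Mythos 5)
Your proposal follows essentially the same route as the paper's proof: reduce to a uniform bound on $\bigl|\dim HF(W_2,\phi^m,\varepsilon)-\dim HF(W_1,\phi^m,\varepsilon)\bigr|$, realize the transfer morphism at the chain level via a short exact sequence whose kernel $K$ is spanned by the (twisted) orbits lying in $W_2\setminus W_1$, and bound $\dim K$ independently of $m$ using that $\phi^m=\operatorname{id}$ there.

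The one place where you gloss over something that is not merely routine is the justification that $\dim K$ can actually be chosen uniformly in $m$. The chain-level model you want is the one coming from a stair-like Hamiltonian, but such a Hamiltonian (transfer data) for $(\phi^m,W_1,W_2,\tfrac{\varepsilon}{2},\varepsilon)$ exists only when the constant $C(W_1,\phi^m)$ of~\eqref{eq:constant} is small compared with $\varepsilon$; in general $C(W_1,\phi^m)$ grows with $m$, so you cannot directly pick transfer data for $\phi^m$ with a fixed small $\varepsilon$. The paper handles this by first replacing $\phi^m$ with an isotopic symplectomorphism $\psi_m$ conjugated by the Liouville flow (Proposition~\ref{pro:shrinking}) so that $C(W_1,\psi_m)<\tfrac{\varepsilon}{2}$, and then choosing the Floer data $(H^m,J^m)$ to agree with $(H^1,J^1)$ on $W_2\setminus W_1$; this is what makes the generating set of $K$, hence its dimension, literally the same for all $m$. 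Your appeal to ``Morse theory on $W_2\setminus W_1$'' is a little loose (the generators there also include short Reeb orbits near $\partial W_1$ and $\partial W_2$ from the stair profile), but the real missing ingredient is this shrinking-and-fixing-the-data step; once it is in place, the rest of your argument closes the proof exactly as in the paper.
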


The next theorem calculates $\kappa$ for the square of a Dehn-Seidel twist furnished by a Lagrangian sphere in a Liouville domain.

\begin{theorem}\label{thm:taukappa}
Let $W$ be a Liouville domain of dimension $2n\geqslant 4, $ and let $L\subset W$ be a Lagrangian sphere. Then,
\[ \kappa(W,\tau_L^2)=4, \]
where $\tau_L:W\to W$ stands for a Dehn-Seidel twist furnished by $L.$
\end{theorem}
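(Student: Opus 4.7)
The plan is to reduce, via Theorem~\ref{thm:ambient}, to a computation inside a disk cotangent bundle $D^*S^n$ and then count $1$-periodic orbits in an explicit Seidel model of the Dehn-Seidel twist.

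By the Weinstein neighborhood theorem, $L\subset W$ admits a Liouville neighborhood symplectomorphic to a disk cotangent bundle $D^*_r S^n$ with its standard Liouville structure, and one may choose a representative of $\tau_L$ that is supported in this neighborhood and coincides there with the model twist on $D^*_r S^n$. Theorem~\ref{thm:ambient} then yields $\kappa(W,\tau_L^2)=\kappa(D^*_r S^n,\tau_L^2)$, so it suffices to compute $\kappa$ on $D^*_r S^n$.

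On $T^*S^n$ with the round metric, the Dehn-Seidel twist is given by the time-$1$ map of a radial Hamiltonian $H(q,p)=\sigma(|p|)$, where $\sigma'$ decreases from $\sigma'(0^+)=\pi$ (so that $\tau_L$ restricts to the antipodal map on $S^n$) to $\sigma'(R)=0$ (so that $\tau_L$ is the identity outside the twist region). Consequently $\tau_L^{2m}$ is generated by $2m\sigma$, and the time-$1$ flow on $\{|p|=r\}$ is the normalized geodesic flow on $S^n$ for time $2m\sigma'(r)$. The fixed-point set therefore decomposes into: (a) the zero section and the complement of the twist region, where $\tau_L^{2m}$ is the identity, contributing a bounded number of generators via Morse theory on $S^n$ and on the complement; and (b) for each $k\in\{1,\dots,m-1\}$, a Morse-Bott family at $\{2m\sigma'(r)=2\pi k\}$ diffeomorphic to the unit cotangent bundle $ST^*S^n$, arising because the normalized geodesic flow on $S^n$ is periodic of period $2\pi$. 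The Morse-Bott spectral sequence then gives the upper bound
\[
\dim HF_*(D^*_r S^n,\tau_L^{2m},\varepsilon)\leqslant 4(m-1)+O(1),
\]
where the constant $4$ comes from $\dim H_*(ST^*S^n;\mathbb{Z}_2)=4$ (via the Gysin sequence for $S^{n-1}\to ST^*S^n\to S^n$, noting that the Euler class $\chi(TS^n)[S^n]^{\vee}$ vanishes mod $2$). This proves $\kappa(W,\tau_L^2)\leqslant 4$.

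For the matching lower bound, I would argue via the action filtration: the action values attached to the $m-1$ Morse-Bott families are monotone in $k$ and well separated, so Floer differentials cannot connect generators belonging to different families; within a single family, the contribution is exactly $\dim H_*(ST^*S^n;\mathbb{Z}_2)=4$, which one verifies by a parity/Conley-Zehnder index computation showing that the remaining Morse-Bott differentials within a family vanish. This rigidity statement, together with the verification that the model twist on $D^*_r S^n$ really agrees with the abstract Dehn-Seidel twist of Theorem~\ref{thm:transfermorphism} in a Floer-theoretic sense, is the main technical obstacle.
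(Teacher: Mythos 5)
The paper reaches the same destination by a genuinely different route, and the comparison is instructive. You propose a direct Morse--Bott computation of $HF(D^*_r \mathbb{S}^n,\tau_L^{2m},\varepsilon)$ from a radial model Hamiltonian for the twist; the paper instead uses the ``slope shift'' fact (Proposition~2.30 and Remark~3.2 of \cite{Uljarevic}) that $HF(D^*\mathbb{S}^n,\tau^m,\varepsilon)\cong HF(D^*\mathbb{S}^n,\OP{id},2\pi m+\varepsilon)$, where $\tau$ is the fibered Dehn twist, and then computes $\dim HF(D^*\mathbb{S}^n,\OP{id},2\pi m+\varepsilon)=4m+2$ in Proposition~\ref{pro:sphereslopes} via an inductive long exact sequence involving $H_*(S^*\mathbb{S}^n)$. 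The $4$ ultimately comes from $\dim H_*(S^*\mathbb{S}^n;\mathbb{Z}_2)=4$ in both arguments, so the geometric input is the same; but the paper's route produces an exact computation of the homology groups and thereby sidesteps the lower-bound cancellation question entirely, whereas your route must show that the Morse--Bott differentials within and between families vanish.

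Two of the three steps you flag as ``the main technical obstacle'' are indeed substantive gaps, and there is a third you haven't flagged. First, the lower bound: ``action values well separated'' only rules out differentials crossing large action gaps, and ``parity of Conley--Zehnder index'' is a claim, not an argument; the paper replaces all of this by the exact long-exact-sequence computation. Second, the identification of your radial model with the abstract Dehn--Seidel twist is handled in the paper by the clean statement that $\tau_L^2$ is symplectically isotopic to the fibered Dehn twist, combined with the invariance of $\kappa$ under symplectic isotopy; your version needs a similar statement but you don't supply one. Third, and unflagged: you write that the Weinstein neighborhood is ``a Liouville neighborhood'' and invoke Theorem~\ref{thm:ambient} as if the ambient Liouville form $\lambda$ restricts to the canonical form on $D^*_r\mathbb{S}^n$. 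In general it does not --- the Liouville vector field of $\lambda$ need not even point outward along $\partial V$ --- so Theorem~\ref{thm:ambient} is not directly applicable. The paper patches this by building an interpolated form $\lambda_1=\lambda_0+d(\chi f)$ that agrees with the canonical form near $L$ and with $\lambda$ near $\partial W$, applying Theorem~\ref{thm:ambient} with $\lambda_1$, and then using Lemma~\ref{lem:changinglambda} to pass back to $\lambda$. Without that step your reduction to $D^*_r\mathbb{S}^n$ is not justified.
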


Note that $\kappa(W,\OP{id})=0.$

\begin{corollary}\label{cor:tauinf}
Any Lagrangian sphere $L$ in any Liouville domain $W$ of dimension greater than or equal to 4 gives rise to a Dehn-Seidel twist $\tau_L$ that represents an element of infinite order in the symplectic mapping class group. In other words, $\tau_L^k$ is not symplectically isotopic to the identity relative to the boundary for all $k\in\mathbb{N}.$
\end{corollary}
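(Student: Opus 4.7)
The plan is a proof by contradiction, using Theorem~\ref{thm:taukappa} together with the isotopy invariance of the Floer groups $HF_\ast(W,\phi,\varepsilon)$. Suppose, toward a contradiction, that $\tau_L^k$ is symplectically isotopic to the identity relative to $\partial W$ for some $k\in\mathbb{N}$. Concatenating such an isotopy with its right-translate by $\tau_L^k$ yields $\tau_L^{2k}\simeq\OP{id}$ rel $\partial W$, and composing this longer isotopy on the right with the fixed symplectomorphism $\tau_L^{2n}$ produces an isotopy
\[ \tau_L^{2(n+k)} \;=\; \tau_L^{2n}\circ\tau_L^{2k} \;\simeq\; \tau_L^{2n}\qquad (\text{rel }\partial W) \]
for every $n\in\mathbb{N}$.

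Since the Floer homology $HF_\ast(W,\phi,a)$ is invariant under exact symplectic isotopies of $\phi$ rel boundary at a fixed admissible slope $a$ (a standard continuation-map argument from \cite{Uljarevic}), the integer sequence $a_n:=\dim HF_\ast(W,\tau_L^{2n},\varepsilon)$ is therefore $k$-periodic in $n$, hence bounded, so that
\[ \kappa(W,\tau_L^2)\;=\;\limsup_{n\to\infty}\frac{a_n}{n}\;=\;0. \]
This contradicts the value $\kappa(W,\tau_L^2)=4$ provided by Theorem~\ref{thm:taukappa}, completing the proof that no iterate $\tau_L^k$ can be symplectically isotopic to the identity rel boundary.

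The only ingredient beyond Theorem~\ref{thm:taukappa} and a formal manipulation of $\kappa$ is the isotopy invariance of $HF_\ast(W,\phi,\varepsilon)$, which I would simply cite from \cite{Uljarevic}. The minor subtlety anticipated is that this invariance is most naturally phrased for \emph{exact} symplectic isotopies, whereas Corollary~\ref{cor:tauinf} refers to symplectic isotopies rel boundary; for symplectomorphisms of a Liouville domain that agree with the identity near $\partial W$, however, a symplectic isotopy rel boundary can be upgraded to an exact one by a standard Moser-type argument, and this is the only step requiring any additional care.
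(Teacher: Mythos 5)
Your argument is correct in structure and matches the paper's (very terse) proof, which also deduces the corollary from Theorem~\ref{thm:taukappa} by contradiction: an isotopy $\tau_L^k \simeq \OP{id}$ rel boundary forces $\dim HF(W,\tau_L^{2n},\varepsilon)$ to be a $k$-periodic, hence bounded, sequence in $n$, giving $\kappa(W,\tau_L^2)=0\neq 4$. Your reduction of the $\limsup$ to a boundedness statement is clean and correct.

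The one place where your write-up is weaker than it should be is the final remark about upgrading a symplectic isotopy rel boundary to an exact (Hamiltonian) one by a ``standard Moser-type argument.'' Moser's trick deforms \emph{symplectic forms}, not isotopies of symplectomorphisms, and it is not the right tool here. The actual obstruction to replacing a compactly supported symplectic isotopy by a Hamiltonian one with the same endpoints is the flux class in $H^1_c(\OP{int}W;\mathbb{R})$, and this does not vanish for free even when both endpoints are exact symplectomorphisms. Fortunately you do not need to prove such an upgrade: the paper's Remark after Definition~\ref{def:iteratedratio} already asserts directly that ``the iterated ratio is invariant under compactly supported symplectic isotopies'' (citing \cite{Uljarevic}), and that is the level of generality the corollary needs. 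You should cite that remark rather than invoke Moser. Alternatively, if you want an explicit mechanism, the correct one is the flux homomorphism together with the identification of continuation/naturality isomorphisms in \cite[Sections~2.7--2.8]{Uljarevic}, not Moser.

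A small stylistic note: the paper's proof also mentions Theorem~\ref{thm:ambient}, but that theorem is an ingredient in the proof of Theorem~\ref{thm:taukappa} rather than of the corollary itself; your omission of it is harmless.
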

\begin{remark}
The analogous (to Corollary~\ref{cor:tauinf}) statement for closed symplectic manifolds is false. For instance, the Dehn-Seidel twist furnished by the Lagrangian diagonal in $\mathbb{S}^2\times\mathbb{S}^2$ is symplectically isotopic to the identity. 
\end{remark}

Theorem~\ref{thm:transfermorphism} (applied to $\phi$ equal to the identity) can be used to obtain some information about closed geodesics.

\begin{corollary}\label{cor:geosphere}
Let $(\mathbb{S}^n,g_0)$ be the $n$-dimensional sphere with the standard Riemannian metric, and let $g$ be a Riemannian metric on $\mathbb{S}^n$ such that $g\leqslant g_0.$ Then, there exists a non-constant closed geodesic on $(\mathbb{S}^n,g)$ of length less than or equal to $2\pi.$
\end{corollary}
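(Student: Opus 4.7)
I would argue by contradiction. Suppose $(\mathbb{S}^n,g)$ has no non-constant closed geodesic of length $\leqslant 2\pi$. The case $n=1$ is trivial (the fundamental loop of $\mathbb{S}^1$ already has $g$-length at most $2\pi$), so assume $n\geqslant 2$. The pointwise inequality $g\leqslant g_0$ gives $\|\cdot\|_g^\ast\geqslant\|\cdot\|_{g_0}^\ast$ on $T^\ast\mathbb{S}^n$, so the unit disk cotangent bundles satisfy $W_1:=D^\ast_g\mathbb{S}^n\subset D^\ast_{g_0}\mathbb{S}^n=:W_2$, both inheriting the canonical Liouville form from $T^\ast\mathbb{S}^n$; Theorem~\ref{thm:transfermorphism} therefore applies with $\phi=\mathrm{id}$.

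The main input I would invoke is the classical identification
\[HF_\ast\bigl(D^\ast_h\mathbb{S}^n,\mathrm{id},c\bigr)\;\cong\;H_\ast\bigl(\Lambda_h^{\leqslant c}\mathbb{S}^n\bigr),\]
valid at admissible $c$ (Viterbo, Abbondandolo--Schwarz, Salamon--Weber), where $\Lambda_h^{\leqslant c}$ denotes the space of loops of $h$-length at most $c$. Under this identification the transfer morphism of Theorem~\ref{thm:transfermorphism} corresponds to the homomorphism induced by the tautological inclusion $\Lambda_{g_0}^{\leqslant c}\mathbb{S}^n\hookrightarrow\Lambda_g^{\leqslant c}\mathbb{S}^n$ (well-defined because $g\leqslant g_0$ forces the $g$-length of any loop to be at most its $g_0$-length); at $c=\infty$ this is the identity on $H_\ast(\Lambda\mathbb{S}^n)$.

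Fix $a=b$ slightly larger than $2\pi$, chosen to miss the length spectra of both $g$ and $g_0$ on $[2\pi,a]$; this is possible because the lengths on $g_0$ are $2k\pi$ and by hypothesis the length spectrum of $g$ lies above $2\pi$. Under the hypothesis the length functional on $\Lambda_g^{\leqslant a}\mathbb{S}^n$ has only constant loops as critical points, so its negative-gradient flow retracts this sublevel set onto $\mathbb{S}^n$, whence
\[HF_\ast(W_1,\mathrm{id},a)\;\cong\;H_\ast(\mathbb{S}^n),\]
supported in degrees $0$ and $n$ only. On the round sphere, by contrast, the Morse--Bott critical manifold of great circles at length $2\pi$ is the unit tangent bundle $T^1\mathbb{S}^n$, of dimension $2n-1$ and Morse index $n-1$; Morse--Bott theory then produces a non-zero class $\alpha\in HF_{3n-2}(W_2,\mathrm{id},a)$ coming from the fundamental class of $T^1\mathbb{S}^n$, and this class remains non-zero in $H_{3n-2}(\Lambda\mathbb{S}^n)\cong SH_{3n-2}(W_2)$ (as one reads off the Sullivan model of $\Lambda\mathbb{S}^n$).

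Since $3n-2>n$ for $n\geqslant 2$, the transfer of $\alpha$ lands in $HF_{3n-2}(W_1,\mathrm{id},a)=0$. Applying the commutative diagram~\eqref{eq:diagram} with the above $(a,b)$ in the top row and $(a',b')=(\infty,\infty)$ in the bottom row then shows that the infinite-slope transfer, which under the Viterbo isomorphism is the identity of $H_\ast(\Lambda\mathbb{S}^n)$, must send the continuation $\bar\alpha\in SH_\ast(W_2)$ of $\alpha$ to zero; but $\bar\alpha$ is non-zero, giving the desired contradiction. The main technical obstacle is the careful compatibility check between the finite-slope transfer morphism of Theorem~\ref{thm:transfermorphism} and the Viterbo-type isomorphism $HF_\ast\cong H_\ast(\Lambda)$ at the level of the action/length filtration; once this is in place, the degree-counting argument above finishes the proof.
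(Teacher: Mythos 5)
Your overall strategy is correct in outline, but it leans on machinery that the paper deliberately avoids, and the step you yourself flag as the ``main technical obstacle'' is a genuine gap. The paper never invokes the Viterbo/Abbondandolo--Schwarz/Salamon--Weber isomorphism $HF_\ast(D^\ast_h M,\mathrm{id},c)\cong H_\ast(\Lambda_h^{\leqslant c}M)$, and in particular never establishes that the transfer morphism of Theorem~\ref{thm:transfermorphism} corresponds, under that isomorphism and for two different metrics, to the inclusion $\Lambda_{g_0}^{\leqslant c}\hookrightarrow\Lambda_g^{\leqslant c}$. That compatibility is a substantial theorem in its own right; your proof depends on it both to identify the infinite-slope transfer $SH(W_2)\to SH(W_1)$ with the identity of $H_\ast(\Lambda\mathbb{S}^n)$ (hence to see it is injective) and to deduce $HF_\ast(W_1,\mathrm{id},a)\cong H_\ast(\mathbb{S}^n)$ from the absence of short geodesics. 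Your Morse--Bott step also quietly uses that the energy functional on $\Lambda\mathbb{S}^n$ is perfect over $\mathbb{Z}_2$ (Ziller), so that the fundamental class of $T^1\mathbb{S}^n$ survives in degree $3n-2$ and then survives further under continuation to $SH_\ast(W_2)$ --- two more non-trivial inputs with no support in the text.

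The paper routes around all of this. Instead of identifying a particular class and following it through the maps, it proves a rank inequality: the \emph{visible rank} $r(W,a)=\dim\iota(HF(W,\mathrm{id},a))$ satisfies $r(W_2,a)\leqslant r(W_1,a)$ when $W_2^r\subset W_1$ (Theorem~\ref{thm:visiblerankcomparison}). The two ingredients this needs are purely Floer-theoretic and are proved internally: the commutative square of Theorem~\ref{thm:transfermorphism}, and Lemma~\ref{lem:transferiso}, which shows (via functoriality of transfer and the rescaling isomorphism from Gutt) that the infinite-slope transfer $SH(W_2)\to SH(W_1)$ is an isomorphism when $W_2^r\subset W_1$. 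The number $r(D^\ast\mathbb{S}^n,2\pi+\varepsilon)=6$ is then read off from the long-exact-sequence computation in Proposition~\ref{pro:sphereslopes}, which stays entirely inside the Floer complex. Your degree-counting contradiction ($3n-2>n$) is essentially the same arithmetic that makes $6>2$ in the paper's argument, but the paper packages it as a dimension count, so no single class needs to be tracked and no loop-space isomorphism is needed. If you want to salvage your proof within the paper's toolkit, replace the invocation of the Viterbo isomorphism at infinite slope by Lemma~\ref{lem:transferiso} (which applies here since $g$ and $g_0$ are uniformly comparable on the compact sphere, giving $W_2^r\subset W_1$ for some $r>0$), and replace the Morse--Bott computation of $HF_{3n-2}(W_2,a)$ by citing Proposition~\ref{pro:sphereslopes}; you would still need to argue that the specific degree-$3n-2$ class maps non-trivially to $SH$, a point the paper sidesteps by counting ranks rather than tracking elements.
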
 
We will prove, in fact, a more general statement (see Theorem~\ref{thm:visiblerankcomparison} and Corollary~\ref{cor:geogeneral}).
\subsection{Conventions}

Let $(W,\lambda)$ be a symplectic manifold. A function $H:W\to\mathbb{R}$ and its Hamiltonian vector field $X_{H}$ are related by $dH=\omega(X_H,\cdot).$ The Hamiltonian isotopy $\psi_t^H:W\to W$ of a time-dependent Hamiltonian $H:\mathbb{R}\times W\to\mathbb{R}:(t,x)\mapsto H_t(x)$ is determined by $\partial_t \psi_t^H=X_{H_t}\circ\psi^H_t,\:\psi_0^H=\OP{id}.$ An almost complex structures $J$ on $W$ is said to be $\omega$-compatible if $\omega(\cdot,J\cdot)$ is a Riemannian metric.

\subsection*{Acknowledgments}
I would like to thank Paul Biran and Dietmar Salamon for many helpful discussions.
\section{Preliminaries}

\subsection{Liouville domains}

\begin{definition}\label{def:liouvilledomain}
A Lioville domain is a compact manifold $W$ with a 1-form $\lambda,$ called Liouville form, that satisfies the following conditions. The 2-form $d\lambda$ is a symplectic form on $W,$ and the Liouville vector field $X_\lambda$, defined by $X_\lambda\lrcorner d\lambda=\lambda,$ points transversally out on the boundary $\partial W.$
\end{definition}

If $(W,\lambda)$ is a Liouville domain, then the restriction $\left.\lambda\right|_{\partial W}$ of $\lambda$ to the boundary $\partial W$ is a contact form. Moreover, a collar neighbourhood of $\partial W$ can be identified with $(\partial W\times (0,1], r\left.\lambda\right|_{\partial W})$ using the flow of the Liouville vector field $X_\lambda.$ Here, $r$ stands for the real coordinate.

\begin{definition}
Let $(W,\lambda)$ be a Liouville domain. There exists a unique embedding
\[\iota :\partial W\times (0,1]\to W\]
such that $\iota(x,1)=x$ and $\iota^\ast \lambda=r \left.\lambda\right|_{\partial W}.$ The \textbf{completion} $\hat{W}$ of $(W,\lambda)$ is the exact symplectic manifold obtained by gluing $W$ and $\partial W\times (0,\infty)$ via $\iota.$
\end{definition}

If $W$ is a Liouville domain and $r\in(0,\infty)$, we denote by $W^r$ the subset of the completion $\hat{W}$ defined by
\[W^r:=\left\{\begin{matrix} W\setminus (\partial W\times (r,1])&\text{for }r<1\\ W\cup \partial W\times(1,r]&\text{for }r\geqslant 1 \end{matrix} \right..\]
Here, and in the rest of the paper, the sets $W$ and $\partial W\times(0,\infty)$ are identified with the  corresponding regions in the completion $\hat{W}.$ 

\subsection{Floer homology for exact symplectomorphisms}

\begin{definition}
An \textbf{exact symplectomorphism} $\phi$ of a Liouville domain $W$ with connected boundary is a diffeomorphism of $W$ equal to the identity near the boundary such that the 1-form $\phi^\ast\lambda-\lambda$ is exact. We denote by $F_\phi:W\to\mathbb{R}$ the unique function that is equal to 0 on the boundary and satisfy $dF_\phi=\phi^\ast\lambda-\lambda.$
\end{definition}

\begin{definition}
Let $\phi$ be an exact symplectomorphism of a Lioville domain $(W,\lambda),$ and let $a$ be an admissible slope. 
\begin{enumerate}
\item Floer data for $(\phi,a)$ consists of a Hamiltonian $H_t:\hat{W}\to \mathbb{R}$ and a family $J_t$ of a $d\lambda$-compatible almost complex structures on $\hat{W}$ satisfying the following conditions
\begin{align*}
&H_{t+1}=H_t\circ\phi,\\
&J_{t+1}=\phi^\ast J_t,
\end{align*}
and such that
\begin{align*}
& H_t(x,r)=ar,\\
& J_t(x,r)\xi=\xi,\\
&J_t(x,r)\partial_r=R,
\end{align*}
for $(x,r)\in\partial W\times (r_0,\infty)$ and $r_0\in(0,\infty)$ large enough. Here, $\xi$ and $R$ stand for the contact structure and the Reeb vector field on $\partial W,$ respectively.
\item Floer data $(H,J)$ for $(\phi,a)$ is said to be regular if
\[\det (d(\phi\circ\psi^H_1)(x)-\OP{id})\not=0\]
for all fixed points $x$ of $\phi\circ\psi_1^H,$ and if the linearized operator of the Floer equation \eqref{eq:Floer} below is surjective.
\end{enumerate}
\end{definition}

Let $\phi:W\to W$ be an exact symplectomorphism, let $a$ be an admissible slope, and let $(H,J)$ be regular Floer data for $(\phi,a).$ The Floer homology $HF_\ast(W,\phi,H,J)$ is the Morse homology for the action functional 
\[\mathcal{A}_{\phi,H}:\Omega_\phi\to\mathbb{R},\]
defined on the space of ($\phi$-)twisted loops
\[\Omega_\phi:=\left\{ \gamma:\mathbb{R}\to\hat{W}\::\:\phi(\gamma(t+1))=\gamma(t)\right\}\]
by
\[\mathcal{A}_{\phi,H}(\gamma):= -\int_0^1 \left(\gamma^\ast\lambda + H_t(\gamma(t))dt\right) - F_\phi(\gamma(1)).\]
In particular, the chain complex, denoted by $CF_\ast(W,\phi,H,J),$ is generated by critical points of $\mathcal{A}_{\phi,H},$ which happen to be the Hamiltonian twisted orbits
\[\OP{Crit}\mathcal{A}_{\phi,H}= \left\{\gamma\in\Omega_\phi\::\:\dot{\gamma}(t)=X_{H_t}(\gamma(t))\right\},\]
and the differential is obtained by counting unparametrized isolated solutions 
\[u:\mathbb{R}^2\to\hat{W}\]
of the Floer equation
\begin{equation}\label{eq:Floer}
\partial_s u+J_t(u)(\partial_t u-X_{H_t}(u))=0
\end{equation}
with periodicity condition
\[\phi(u(s,t+1))=u(s,t).\]
Different choices of regular Floer data for $(\phi,a)$ lead to canonically isomorphic Floer homologies. Hence the group $HF_\ast(W,\phi,a)$ is well defined. The group $HF_\ast(W,\OP{id},a)$ will also be denoted by $HF_\ast(W,a).$

In the next two lemmas, we prove that the Floer homology $HF_\ast(W,\phi,a)$ remains the same if we change the Liouville form by adding an exact 1-form.

\begin{lemma}
Let $(W,\lambda)$ be a Liouville domain and let $f:W\to \mathbb{R}$ be a function. Then, a map $\phi: W\to W$ is an exact symplectomorphism of $(W,\lambda)$ if, and only if, it is an exact symplectomorphism of $(W,\lambda +df).$ 
\end{lemma}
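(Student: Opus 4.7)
The plan is a short direct computation. The definition of an exact symplectomorphism comprises three conditions on $\phi$: being a diffeomorphism of $W$, equaling the identity near $\partial W$, and the exactness of $\phi^\ast\lambda-\lambda$. The first two conditions depend only on the underlying smooth map $\phi$, not on the choice of primitive 1-form, so they are automatically preserved when replacing $\lambda$ by $\lambda+df$. The whole content of the lemma is therefore the equivalence of exactness of $\phi^\ast\lambda-\lambda$ and of $\phi^\ast(\lambda+df)-(\lambda+df)$.

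The key step is to expand, using naturality $\phi^\ast(df)=d(f\circ\phi)$,
$$\phi^\ast(\lambda+df)-(\lambda+df)=(\phi^\ast\lambda-\lambda)+d(f\circ\phi-f).$$
Since $d(f\circ\phi-f)$ is manifestly exact on all of $W$, the left-hand side is exact if and only if $\phi^\ast\lambda-\lambda$ is. This proves both implications simultaneously. A pleasant byproduct is that the distinguished primitive $F_\phi$ (the one vanishing on $\partial W$) is actually the \emph{same} function for both Liouville forms: the primitives a priori differ by $f\circ\phi-f$, but this correction itself vanishes on $\partial W$ because $\phi=\OP{id}$ there.

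The only subtlety not addressed by the above is ensuring that $(W,\lambda+df)$ itself satisfies Definition~\ref{def:liouvilledomain}: the symplectic form $d(\lambda+df)=d\lambda$ is unchanged, and if one tacitly assumes $f$ to be locally constant near $\partial W$ (as is standard in this setting), the Liouville vector field is unaltered near the boundary and still points transversally outward. There is no real obstacle in the argument; the lemma is essentially a naturality remark that isolates the invariance of the Floer homology under this modification of the Liouville form, to be exploited in the lemma that follows.
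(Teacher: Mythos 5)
Your main argument is correct and is essentially the same direct computation as the paper: expand $\phi^\ast(\lambda+df)-(\lambda+df)$, use naturality $\phi^\ast(df)=d(f\circ\phi)$, and observe that the extra term $d(f\circ\phi-f)$ is exact, so exactness of one difference is equivalent to exactness of the other. Your remark that the hypotheses on $f$ should guarantee $(W,\lambda+df)$ is still a Liouville domain is a fair point, addressed in the paper's subsequent lemma by taking $f$ to vanish near $\partial W$.

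However, your ``pleasant byproduct'' is wrong, and it is worth flagging because it would derail the very next lemma in the paper. You claim $\widetilde{F}_\phi=F_\phi$ on the grounds that the correction $f\circ\phi-f$ vanishes on $\partial W$. But $F_\phi$ and $\widetilde{F}_\phi$ are primitives of \emph{different} exact 1-forms, namely $\phi^\ast\lambda-\lambda$ and $\phi^\ast\widetilde{\lambda}-\widetilde{\lambda}$, so normalizing both to vanish on $\partial W$ does not force them to coincide: it only forces
\[\widetilde{F}_\phi=F_\phi+f\circ\phi-f,\]
since the right-hand side is a primitive of $\phi^\ast\widetilde{\lambda}-\widetilde{\lambda}$ that vanishes on $\partial W$ (because $\phi=\OP{id}$ there and $F_\phi|_{\partial W}=0$). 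The function $f\circ\phi-f$ vanishes only where $\phi=\OP{id}$, i.e.\ near the boundary, not on all of $W$. The paper's Lemma~\ref{lem:changinglambda} uses precisely this non-trivial relation $\widetilde{F}_\phi=F_\phi+f\circ\phi-f$ together with the twisted-loop condition $\phi(\gamma(1))=\gamma(0)$ to cancel the discrepancy in the action functional; if $\widetilde{F}_\phi$ were literally $F_\phi$, the integral term $\int_0^1\frac{d}{dt}(f\circ\gamma)\,dt$ would have nothing to cancel against and the computation would not close up.
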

\begin{proof}
Denote the 1-form $\lambda+df$ by $\widetilde{\lambda}.$ Assume $\phi$ is an exact symplectomorphism of $(W,\lambda).$ Then, there exists a function $F:W\to\mathbb{R}$ such that
\[ \phi^\ast\lambda-\lambda= dF, \]
and $\phi$ is compactly supported. Hence
\[
\begin{split}
\phi^\ast\widetilde{\lambda}- \widetilde{\lambda} &= \phi^\ast(\lambda +df)-\lambda-df\\
&= \phi^\ast\lambda-\lambda + d(f\circ \phi - f)\\
&= d(F+ f\circ \phi- f),
\end{split} 
\]
i.e. $\phi$ is an exact symplectomorphism of $(W,\widetilde{\lambda}).$ The opposite direction is proven analogously.
\end{proof}

\begin{lemma}\label{lem:changinglambda}
Let $(W,\lambda)$ be a Liouville domain, let $\phi: W\to W$ be an exact symplectomorphism, and let $f:W\to\mathbb{R}$ be a function equal to $0$ near the boundary. Then,
\[ HF_\ast(W,\lambda,\phi,a)\cong HF_\ast(W,\lambda+ df, \phi, a), \]
for all admissible slopes $a.$
\end{lemma}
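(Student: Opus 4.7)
The plan is to show that the two Floer homologies are equal on the nose, by observing that a regular Floer datum for the original Liouville form is also a regular Floer datum for the perturbed one, and that the action functionals agree on twisted loops.

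First, since $f$ vanishes near $\partial W$, so does $df$; hence $\lambda$ and $\widetilde\lambda:=\lambda+df$ agree in a neighbourhood of the boundary. In particular the contact form $\left.\lambda\right|_{\partial W}$, the Reeb vector field, and the Liouville vector field near the boundary are all unchanged, so the two collar identifications agree, the completions $\hat W$ are canonically identified, and the set of admissible slopes for $(W,\lambda)$ coincides with the set of admissible slopes for $(W,\widetilde\lambda)$. Moreover $d\lambda=d\widetilde\lambda$, so Hamiltonian vector fields, the Floer equation, and the compatibility condition on $J$ all have identical meaning; a regular Floer datum $(H,J)$ for $(\phi,a)$ with respect to $\lambda$ is also a regular Floer datum with respect to $\widetilde\lambda$.

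Next, I would identify the two action functionals on the space $\Omega_\phi$ of $\phi$-twisted loops. By the previous lemma's computation, the primitive for $\widetilde\lambda$ is $\widetilde F_\phi = F_\phi + f\circ\phi - f$, and since $f$ vanishes near $\partial W$ this already satisfies the normalization $\left.\widetilde F_\phi\right|_{\partial W}=0$. For $\gamma\in\Omega_\phi$ the twisting condition gives $\gamma(0)=\phi(\gamma(1))$, so
\[
\int_0^1\gamma^\ast df = f(\gamma(1)) - f(\gamma(0)) = f(\gamma(1)) - f(\phi(\gamma(1))).
\]
Substituting into the definition of $\mathcal{A}_{\phi,H}$ computed with $\widetilde\lambda$ and $\widetilde F_\phi$,
\[
\widetilde{\mathcal{A}}_{\phi,H}(\gamma) = \mathcal{A}_{\phi,H}(\gamma) - \bigl(f(\gamma(1))-f(\phi(\gamma(1)))\bigr) - \bigl(f(\phi(\gamma(1)))-f(\gamma(1))\bigr) = \mathcal{A}_{\phi,H}(\gamma),
\]
so the two action functionals coincide pointwise on $\Omega_\phi$.

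Consequently the sets of critical points agree, their Conley–Zehnder indices agree, and the Floer differentials—defined via the same Floer equation \eqref{eq:Floer} with the same $(H,J)$—agree as well. The chain complexes $CF_\ast(W,\lambda,\phi,H,J)$ and $CF_\ast(W,\lambda+df,\phi,H,J)$ are therefore literally identical, which yields the claimed isomorphism of Floer homologies. There is no real obstacle here; the only subtlety is the bookkeeping that pins down $\widetilde F_\phi$ unambiguously (using that $f\equiv 0$ near $\partial W$) so that the cancellation in the action functional is exact rather than merely up to a constant.
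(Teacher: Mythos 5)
Your proof is correct and follows essentially the same route as the paper: observe that $d\lambda=d(\lambda+df)$ so the Floer data are unchanged, express the new primitive as $\widetilde F_\phi=F_\phi+f\circ\phi-f$, and use the twisting condition $\phi(\gamma(1))=\gamma(0)$ together with the fundamental theorem of calculus to show the action functionals agree pointwise on $\Omega_\phi$. The paper's proof is just a slightly more terse version of the same cancellation.
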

\begin{proof}
Denote by $\widetilde{\lambda}$ the 1-form $\lambda+ df.$ Since $\lambda$ and $\widetilde{\lambda}$ agree near the boundary (where $f$ is equal to 0), the slope of a Hamiltonian with respect to $(W,\lambda)$ is the same as the one with respect to $(W,\widetilde{\lambda}).$ Moreover, the completions of $(W,\lambda)$ and $(W,\widetilde{\lambda})$   are symplectomorphic. Let $H$ be a Hamiltonian with the slope equal to $a.$ Denote by $\widetilde{F}_\phi$ the compactly supported function $\hat{W}\to\mathbb{R}$ such that
\[ \phi^\ast\widetilde{\lambda} - \widetilde{\lambda}= d\widetilde{F}_\phi. \]
The functions $\widetilde{F}_\phi$ and $F_\phi$ are related by
\[ \widetilde{F}_\phi= F_\phi+ f\circ \phi - f. \]
The lemma follows from the following sequence of equalities
\[
\begin{split}
\widetilde{\mathcal{A}}_{\phi,H} (\gamma):=& -\int_0^1\left( \gamma^\ast\widetilde{\lambda} + H_t(\gamma(t))dt \right) - \widetilde{F}_\phi(\gamma(1))\\
=& \mathcal{A}_{\phi,H}(\gamma) - \int_0^1 \frac{d}{dt} (f\circ\gamma) dt - f\circ\phi(\gamma(1))+ f(\gamma(1))\\
=& \mathcal{A}_{\phi,H}(\gamma) - f(\gamma(1))+ f(\gamma(0)) - f\circ\phi(\gamma(1))+ f(\gamma(1))\\
=& \mathcal{A}_{\phi,H}(\gamma),
\end{split} 
\]
for $\gamma\in\Omega_\phi.$
\end{proof}
The graded group $CF^{<c}_\ast(W,\phi,H,J),$ $c\in\mathbb{R},$ generated by the elements of $\OP{Crit}\mathcal{A}_{\phi,H}$ having the action less than $c$ is a subcomplex of $CF_\ast(W,\phi,H,J).$ It fits into a short exact sequence of chain complexes
\[0\to CF_\ast^{<c}(W,\phi,H,J)\to CF_\ast(W,\phi,H,J)\to CF_\ast^{\geqslant c}(W,\phi,H,J)\to 0,\]
where
\[CF_\ast^{\geqslant c}(W,\phi,H,J):=\frac{CF_\ast(W,\phi,H,J)}{CF_\ast^{<c}(W,\phi,H,J)}.\]
The homologies of the chain complexes $CF_\ast^{< c}(\cdots)$ and $CF_\ast^{\geqslant c}(\cdots)$ are denoted by $HF_\ast^{< c}(\cdots)$ and $HF_\ast^{\geqslant c}(\cdots),$ respectively.

For more details on Floer homology for an exact symplectomorphism, see \cite{Uljarevic}.

\subsection{Notation}\label{sec:notation}
Throughout, $(W_2,\lambda)$ is a $2n$-dimensional Liouville domain with connected boundary, $W_1\subset W_2$ is a codimension-0 submanifold with connected boundary such that $(W_1,\lambda)$ is a Liouville domain in its own right, and $\phi:W_1\to W_1$ is an exact symplectomorphism.

\section{Transfer morphism}

The transfer morphism is essentially the map
\[HF_\ast(W_2,\phi, H,J)\to HF_\ast^{\geqslant 0}(W_2,\phi,H,J)\]
induced by the natural projection of chain complexes $CF_\ast(\cdots)\to CF_\ast^{\geqslant 0}(\cdots)$ for a Hamiltonian $H$ that is $C^2$-close to a so-called stair-like Hamiltonian. The group $HF_\ast(W_2,\phi, H,J)$ is isomorphic to $HF_\ast(W_2,\phi, a),$ where $a$ is the slope of $H,$ and $HF_\ast^{\geqslant 0}(W_2,\phi,H,J)$ can be identified with $HF_\ast(W_1,\phi,b)$ for a certain slope $b.$ In the rest of the section, we describe the construction in more details and prove Theorem~\ref{thm:transfermorphism}.

\subsection{Stair-like Hamiltonians}

\begin{definition}\label{def:Hstair}
Let $0<a<b$ be such that $a$ is admissible with respect to $W_2$ and $b$ is admissible with respect to $W_1,$ and let $b_0$ be the greatest period of some Reeb orbit on $\partial W_1$ that is smaller than $b.$ The set $\mathcal{H}_{\OP{stair}}(\phi, W_1, W_2, a, b)$ is defined to be the set of (time-independent) Hamiltonians $H:\hat{W}_2\to\mathbb{R}$ having the following property. There exist positive real numbers $\delta_1,\delta_2,\delta_3, A, B, C\in(0,\infty)$ and functions
\begin{align*}
h_1&:[\delta_1,2\delta_1]\to \mathbb{R},\\
h_2&:[1-\delta_2,1]\to\mathbb{R},\\
h_3&: [1+\delta_3,1+2\delta_3]\to\mathbb{R},
\end{align*}
such that
\begin{enumerate}
\item $\phi$ is compactly supported in the interior of $W^{\delta_1},$
\item $\sup\limits_{p\in W_1}\abs{F_\phi(p)}<A,$
\item $h_1$ and $h_3$ are convex and strictly increasing,
\item $h_2$ is concave, strictly increasing, and $h_2(r)>rb_0$ for all $r\in [1-\delta_2,1]$
\end{enumerate}
and such that
\[H(p)=\left\{ 
\begin{matrix}
-A & \text{for } p\in W_1^\delta,\\
h_1(r) & \text{for } p=(x,r)\in \partial W_1\times[\delta_1,2\delta_1),\\
b(r-2\delta_1) & \text{for } p=(x,r)\in\partial W_1\times[2\delta_1,1-\delta_2),\\
h_2(r) & \text{for } p=(x,r)\in\partial W_1\times [1-\delta_2, 1],\\
B & \text{for } p\in W_2^{1+\delta_3}\setminus W_1,\\
h_3(r) & \text{for } p=(x,r)\in\partial W_2\times(1+\delta_3,1+2\delta_3),\\
ar+C & \text{for } p=(x,r)\in\partial W_2\times[1+2\delta_3,\infty).
\end{matrix}
\right.\]
\end{definition}
A typical element of $\mathcal{H}_{\OP{stair}}(\phi, W_1, W_2, a, b)$ is shown in Figure~\ref{fig:stair-like}.
\begin{figure}
\begin{tikzpicture}[scale=0.9, every edge quotes/.style={fill=white,font=\footnotesize}]
\draw [rounded corners] (0,0) to [out=90, in=180] (2,1.5) to [out=0, in=180](5,0.75)--(16,2.675);
\draw [rounded corners] (0,0) to [out=-90, in=180] (1.5,-1.5) to [out=0, in=180] (5,-0.75)--(16,-2.675);

\draw (5,0.75) arc [start angle= 90, end angle=270, x radius= 0.14, y radius= 0.75];
\draw [dashed] (5,-0.75) arc [start angle= -90, end angle=90, x radius= 0.14, y radius= 0.75];
\draw (8,1.275) arc [start angle= 90, end angle=270, x radius= 0.238, y radius= 1.275];
\draw [dashed] (8,-1.275) arc [start angle= -90, end angle=90, x radius= 0.238, y radius= 1.275];
\draw (13,2.15)arc [start angle= 90, end angle=270, x radius= 0.402, y radius= 2.15];
\draw [dashed] (13,-2.15) arc [start angle= -90, end angle=90, x radius= 0.402, y radius= 2.15];
\draw (16,0) ellipse [x radius = 0.5, y radius=2.675];

\draw (1, 0.2) to [out=-45, in=210] (3,0.2);
\draw (1.6, -0.1) to [out= 30, in=135] (2.4, 0);
\draw [shift={(8,0)}](1, 0.2) to [out=-45, in=210] (3,0.2);
\draw [shift={(8,0)}](1.6, -0.1) to [out= 30, in=135] (2.4, 0);

\draw [dotted,->] (0,-7.15)--(0,7.1);
\draw [dotted] (5,-0.75)--(5,7.1);
\draw [dotted] (5.5,-2.25)--(5.5,7.1);
\draw [dotted] (6,-2.95)--(6,7.1);
\draw [dotted] (7.5,-3.65)--(7.5,7.1);
\draw [dotted] (8,-4.35)--(8,7.1);
\draw [dotted] (13,-5.05)--(13,7.1);
\draw [dotted] (13.5,-5.75)--(13.5,7.1);
\draw [dotted] (14,-6.45)-- (14,7.1);

\draw [dotted] (0,6)--(8,6);
\draw [dotted, <->] (0,-2)--(5.5,-2) node [midway, fill=white] {$W_1^{\delta_1}$};
\draw [dotted, <->] (0,-2.7)--(6,-2.7) node [midway, fill=white] {$W_1^{2\delta_1}$};
\draw [dotted, <->] (0,-3.4)--(7.5,-3.4) node [midway, fill=white] {$W_1^{1-\delta_2}$};
\draw [dotted, <->] (0,-4.1)--(8,-4.1) node [midway, fill=white] {$W_1$};
\draw [dotted, <->] (0,-4.8)--(13,-4.8) node [midway, fill=white] {$W_2$};
\draw [dotted, <->] (0,-5.5)--(13.5,-5.5) node [midway, fill=white] {$W_2^{\delta_3}$};
\draw [dotted, <->] (0,-6.2)--(14,-6.2) node [midway, fill=white] {$W_2^{2\delta_3}$};
\draw [dotted, <->>] (0,-6.9)--(16,-6.9) node [midway, fill=white] {$\hat{W}_2$};

\draw [rounded corners, thick](0,3)--(5.6,3)--(7.9,6)--(13.6,6)--(16,7);

\node[above right] at (0,3) {$-A$};
\node[above right] at (0,6) {$B$};
\node[below left] at (15.5,6.5) {$H$};
\end{tikzpicture}
\caption{A stair-like Hamiltonian}
\label{fig:stair-like}
\end{figure}
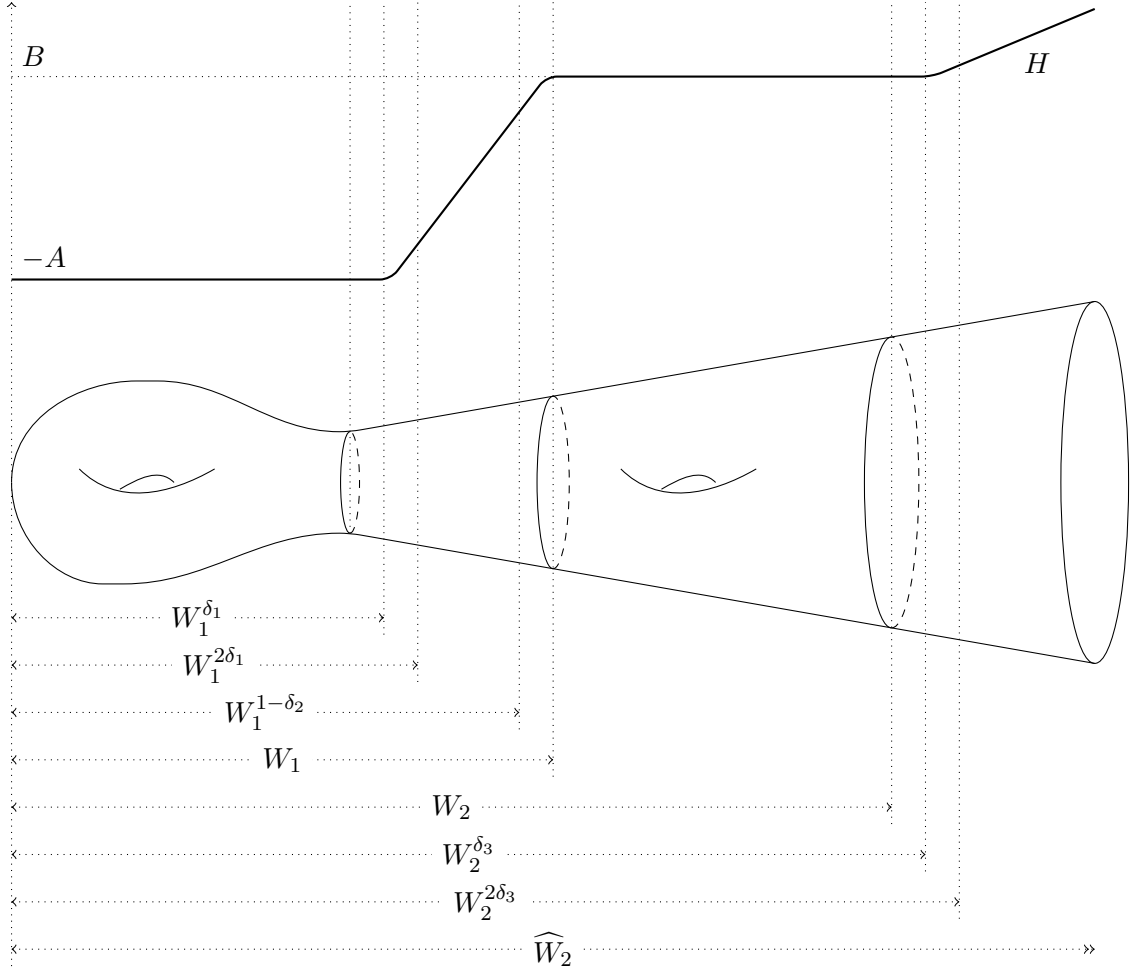

\begin{definition}
Let $W$ be a Liouville domain, and let $\phi:\hat{W}\to\hat{W}$ be a diffeomorphism. The \textbf{support radius} $\rho(W,\phi)$ of $\phi$ is defined by
\[\rho(W,\phi):=\inf \left\{ r\in(0,\infty)\::\:\OP{supp} \phi\subset W^r\right\}.\]
\end{definition}

\begin{lemma}
Let $a,b,$ and $b_0$ be as in Definition~\ref{def:Hstair}. If 
\[C(W_1,\phi)< \min\left\{b-b_0, b-a\right\},\]
where
\begin{equation}\label{eq:constant}
C(W_1,\phi):=2\max\left\{\sup_{p\in W_1}\abs{F_\phi(p)}, \rho (W_1,\phi) \right\},
\end{equation}
then the set $\mathcal{H}_{\OP{stair}}(\phi,W_1,W_2,a,b)$ is not empty. 
\end{lemma}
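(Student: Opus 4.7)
The plan is to construct an explicit $H \in \mathcal{H}_{\OP{stair}}(\phi, W_1, W_2, a, b)$ by choosing the parameters $\delta_1, \delta_2, \delta_3, A, B, C$ in a coordinated order so that the interlocking constraints of Definition~\ref{def:Hstair} can all be satisfied. Writing $F := \sup_{p \in W_1} |F_\phi(p)|$ and $\rho := \rho(W_1, \phi)$, the hypothesis translates into $\max\{F, \rho\} < \tfrac{1}{2}\min\{b-a, b-b_0\}$; this is the only quantitative input that will be used.

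First I would fix $\delta_1 > \max\{\rho, F/b\}$ but small enough that $\delta_1 < 1/2$ and that the upper bound imposed by the later condition $h_2(r) > rb_0$ is respected; the hypothesis provides just enough room to make the window for $\delta_1$ nonempty. Then choose $A \in (F, b\delta_1)$, so condition~(2) holds and a smooth convex $h_1$ on $[\delta_1, 2\delta_1]$ with boundary values $-A, 0$ and endpoint slopes $0, b$ can exist. Next pick $\delta_2 > 0$ small enough that $2\delta_1 < 1 - \delta_2$ and $b(1-\delta_2-2\delta_1) > (1-\delta_2) b_0$; then take $B$ in the (nonempty) interval $(\max\{b_0, b(1-\delta_2-2\delta_1)\}, b(1-2\delta_1))$. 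Finally pick small $\delta_3 > 0$ and $C$ with $0 < a(1+2\delta_3)+C-B < a\delta_3$.

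With these parameters in hand, the three interpolating functions $h_1, h_2, h_3$ are produced by standard smoothing: each is a smooth strictly monotone function of the prescribed convexity type on its compact interval, with prescribed boundary values and one-sided derivatives matching the adjacent pieces of $H$. Existence reduces to the observation that the value drop across each interval lies strictly between $0$ and (endpoint slope)$\times$(interval length), which was arranged by the choices above. The positivity condition $h_2(r) > rb_0$ then follows automatically from concavity: the secant line of $h_2$ is linear and strictly above $rb_0$ at both endpoints by our parameter choices, hence throughout $[1-\delta_2, 1]$, and $h_2$ dominates its secant.

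The principal obstacle is the choice of $\delta_1$: it must exceed both $\rho$ (to contain $\OP{supp}\phi$ in the interior of $W_1^{\delta_1}$) and $F/b$ (to permit a convex $h_1$ with average slope below $b$ and simultaneously $A > F$), while staying below $1/2$ (forced by the stair geometry $2\delta_1 < 1-\delta_2$) and below the threshold coming from $h_2 > rb_0$ combined with the tangent inequality $h_2(r) \leq b(r-2\delta_1)$ that concavity with left-slope $b$ imposes. The hypothesis $C(W_1, \phi) < \min\{b-b_0, b-a\}$ is precisely what reconciles these four competing bounds; once $\delta_1$ is pinned down, the remaining parameters follow in order without further difficulty.
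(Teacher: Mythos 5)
Your proof takes essentially the same route as the paper's: choose $\delta_1,A,\delta_2,B,\delta_3,C$ in a cascading order so that the endpoint-matching constraints hold, then invoke the smoothing lemmas (Lemmas~\ref{lem:tehlem} and~\ref{lem:tehlem2}) to produce $h_1,h_2,h_3$. The only structural difference is the order of the first two choices: the paper picks $A\in\bigl(C(W_1,\phi)/2,\min\{b-b_0,b-a\}/2\bigr)$ first and then $\delta_1\in\bigl(A/b,\min\{b-b_0,b-a\}/(2b)\bigr)$, whereas you fix $\delta_1$ first and then $A\in(F,b\delta_1)$; the resulting constraint systems are equivalent.

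One caution about the step you describe as the ``principal obstacle.'' You correctly surface that condition~(1) of Definition~\ref{def:Hstair} forces $\delta_1>\rho$, which the paper's explicit interval for $\delta_1$ only yields indirectly (it gives $\delta_1>A/b>\rho/b$). But your assertion that the hypothesis $C(W_1,\phi)<\min\{b-b_0,b-a\}$ automatically makes the window $\bigl(\max\{\rho,F/b\},\min\{1/2,(b-b_0)/(2b)\}\bigr)$ nonempty is immediate only when $b\leqslant1$: the hypothesis gives $\rho<\min\{b-b_0,b-a\}/2$, which dominates the needed bound $\rho<(b-b_0)/(2b)$ precisely when $b\leqslant1$, and not in general. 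So ``the hypothesis is precisely what reconciles these four competing bounds'' is stated, not shown, and this deserves an explicit argument rather than an assertion. In the paper's downstream use of the lemma this point is moot, because Proposition~\ref{pro:shrinking} is invoked first to conjugate $\phi$ by the Liouville flow and make $C(W_1,\phi)$ (hence $\rho$) arbitrarily small, so the lemma is only applied in a regime where the window is manifestly nonempty.
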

\begin{proof}
The constants $\delta_1,\delta_2,\delta_2, A, B, C$ from Definition~\ref{def:Hstair} are chosen in the following way and the following order.
\begin{itemize}
\item Choose
\[ A\in \left( \frac{C(W_1,\phi)}{2} , \frac{\min\{b-b_0,b-a \}}{2}  \right), \] 
\item choose $\delta_1\in\left(\frac{A}{b},\frac{\min\{b-b_0,b-a\}}{2b}\right), $
\item choose $\delta_2\in \left(0, 1-\frac{2b\delta_1}{b-b_0}\right),$
\item choose $B\in(\max\{a, b_0, b-2b\delta_1-b\delta_2\}, b-2b\delta_1),$
\item choose $\delta_3\in\left(0,\frac{B-a}{a}\right),$
\item choose $C\in(\max\{0, B-a(1+2\delta_3)\}, B-a(1+\delta_3)).$
\end{itemize}
It follows that the following inequalities hold
\begin{align}
& \label{eq:ineq1} 0<-A-(2b\delta_1)-b\delta_1<b\delta_1,\\
& \label{eq:ineq2} 0<B-(-2b\delta_1)-b(1-\delta_2)<b\delta_2,\\
& \label{eq:ineq3} b(1-\delta_2)+(-2b\delta_1)>(1-\delta_2)b_0,\\
& \label{eq:ineq4} B>b_0,\\
& \label{eq:ineq5} 0<B-C-a(1+\delta_3)<a\delta_3.
\end{align}
Due to Lemma~\ref{lem:tehlem} and Lemma~\ref{lem:tehlem2}, \eqref{eq:ineq1} implies that $h_1$ exists, \eqref{eq:ineq2}, \eqref{eq:ineq3}, \eqref{eq:ineq4} imply that $h_2$ exists, and \eqref{eq:ineq5} implies that $h_3$ exists. This finishes the proof.
\end{proof}

\begin{lemma}
Let $\phi, a, b, H, \delta_1, \delta_2, \delta_3, A, B, C$ be as in Definition~\ref{def:Hstair}. The ranges of the action functional $\mathcal{A}_{\phi,H}$ when evaluated on $\phi$-twisted Hamiltonian orbits contained in the regions
\[\begin{split} 
&I:=W_1^{\delta_1},\quad II:=\partial W_1\times(\delta_1,2\delta_1),\quad III:=\partial W_1\times(1-\delta_2,1),\\
&IV:= W_2^{1+\delta_3}\setminus \OP{int} W_1,\quad V:=\partial W_2\times(1+\delta_3,1+2\delta_3)
\end{split}\]
are given in the following table. \\
\begin{center}
\begin{tabular}{l|c|c|c|c|c}
region& I&II&III&IV&V\\\hline
$\mathcal{A}_{\phi,H} \in $ & $[A-\norm{F_\phi}, A+\norm{F_\phi}]$ & $(A, 2b\delta_1)$ & $(-B,0)$ & $\{-B\}$ & $(-B,-C)$
\end{tabular}
\end{center}\vspace{0.5cm}
Note that each $\phi$-twisted orbit of $H$ is contained in one of these regions.
\end{lemma}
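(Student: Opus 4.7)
The plan is to analyze each region in turn, classifying $\phi$-twisted orbits of $H$ and computing their action directly from
\[\mathcal{A}_{\phi,H}(\gamma)=-\int_0^1\bigl(\gamma^\ast\lambda+H_t(\gamma(t))\,dt\bigr)-F_\phi(\gamma(1)).\]
The first step is to dispose of the two linear ``ramp'' regions $\partial W_1\times[2\delta_1,1-\delta_2)$ and $\partial W_2\times[1+2\delta_3,\infty)$. On these regions $H$ depends only on $r$ with constant derivative equal to $b$ or $a$. Since $\lambda=r\alpha$ on each collar and $dH=d\lambda(X_H,\cdot)$ gives $X_H=-h'(r)R$ with $R$ the Reeb vector field, a 1-periodic orbit at radius $r_0$ would force $h'(r_0)$ to be a Reeb period. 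Admissibility of $a$ and of $b$ rules this out, so these ramps carry no twisted orbits; every twisted orbit therefore lies in one of the five listed regions.

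Next I would treat the conical regions II, III, V. On each of these $\phi=\mathrm{id}$, because $\OP{supp}\phi\subset W_1^{\delta_1}$, so $F_\phi(\gamma(1))=0$. An orbit at radius $r_0$ satisfies $\dot\gamma=-h'(r_0)R$, hence $\gamma^\ast\lambda=-r_0h'(r_0)\,dt$ and $\int_0^1 H\circ\gamma\,dt=h(r_0)$, yielding
\[\mathcal{A}_{\phi,H}(\gamma)=r_0h'(r_0)-h(r_0).\]
Differentiating, $\tfrac{d}{dr}\bigl(rh'(r)-h(r)\bigr)=rh''(r)$, which is strictly positive when $h$ is strictly convex and strictly negative when $h$ is strictly concave. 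Plugging in the boundary data from Definition~\ref{def:Hstair} (namely $h_1(\delta_1)=-A$, $h_1'(\delta_1)=0$, $h_1(2\delta_1)=0$, $h_1'(2\delta_1)=b$, and analogously for $h_3$), this monotonicity gives $\mathcal{A}\in(A,2b\delta_1)$ in Region II and $\mathcal{A}\in(-B,-C)$ in Region V.

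For Region III, concavity of $h_2$ makes $r\mapsto rh_2'(r)-h_2(r)$ strictly decreasing, so from the right endpoint $r=1$, where the value is $-B$, we get the lower bound $\mathcal{A}>-B$. The upper bound $\mathcal{A}<0$ is the one genuinely delicate point: it uses assumption~(4) of the stair-like definition, $h_2(r)>rb_0$, together with the observation that since $h_2'(r_0)$ must be a Reeb period of $\partial W_1$ lying in $(0,b)$, it is at most $b_0$. Therefore $r_0h_2'(r_0)\leqslant r_0b_0<h_2(r_0)$ and $\mathcal{A}<0$. Finally, Regions I and IV consist of constant orbits at fixed points of $\phi$, giving $\mathcal{A}=A-F_\phi(x_0)\in[A-\norm{F_\phi},A+\norm{F_\phi}]$ on I and $\mathcal{A}=-B$ on IV (since $\phi=\mathrm{id}$ and $F_\phi\equiv 0$ outside $W_1$).

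The step I anticipate as the only nontrivial one is the upper bound in Region III described above; everywhere else the computation is mechanical once the formula $\mathcal{A}=r_0h'(r_0)-h(r_0)$ and the convexity/concavity of $rh''(r)$ are in hand.
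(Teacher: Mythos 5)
Your proposal is correct and follows essentially the same route as the paper: compute $\mathcal{A}_{\phi,H}(\gamma)=r_0h'(r_0)-h(r_0)$ on each cone region, use the sign of $r h''(r)$ together with the boundary values of $h$ and $h'$ to bound the action, and handle the constant regions by noting the orbits are fixed points of $\phi$. The paper only writes out Region III in detail (calling the others ``similar and even more direct''), and there your argument is a minor variant — you bound $h_2'(r_0)\leqslant b_0$ directly for every orbit radius $r_0$, while the paper uses the monotonicity of $r\mapsto rh_2'(r)-h_2(r)$ to reduce to the single radius $r_1$ with $h_2'(r_1)=b_0$ — but the content is identical.
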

\begin{proof}
We will prove the statement only for region III. For the other regions, the proof is similar and even more direct. 

The symplectomorphism $\phi$ is equal to the identity in region III. Therefore, the twisted Hamiltonian orbits coincide with Hamiltonian loops. Additionally, due to the form of the Hamiltonian $H$ in this region, they can be explicitly described in terms of periodic Reeb orbits on $\partial W_1.$ Each 1-periodic Hamiltonian orbit in region III is given by
\begin{equation}\label{eq:hamreeb}
t\mapsto \left(\gamma(-h_2'(r_0)t), r_0\right)
\end{equation}
where $\gamma:\mathbb{R}\to \partial W_1$ is a $h'_2(r_0)$-periodic Reeb orbit. The action of \eqref{eq:hamreeb} is equal to
\[r_0h'_2(r_0)-h_2(r_0).\]
Consider the function
\[f:[1-\delta_2,1]\to\mathbb{R}\::\: r\mapsto rh'_2(r)-h_2(r).\]
Since $f'(r)=rh''_2(r)\leqslant 0,$ $f$ is decreasing. Hence the action of a 1-periodic Hamiltonian orbit in region III lies in the interval $[f(1),f(r_1)]=(-B,f(r_1)),$ where $r_1\in(1-\delta_2,1)$ is the smallest number such that $h'_2(r_1)$ is a period of some Reeb orbit on $\partial W_1.$ Since $h_2'$ is continuous and decreasing ($h_2$ is concave), we get $h'_2(r_1)=b_0.$ Therefore,
\[f(r_1)=r_1b_0-h_2(r_1)<0\]
(because, by definition, $h_2(r)>rb_0$ for all $r\in[1-\delta_2,1]$). This finishes the proof.
\end{proof}

\begin{proposition}\label{pro:shrinking}
Let $\phi:W_1\to W_1$ be an exact symplectomorphism and let $\phi_t:W_1\to W_1$ be the isotopy of exact symplectomorphisms given by
\[\phi_t:= \left(\psi_t^\lambda\right)^{-1}\circ\phi\circ\psi^\lambda_t,\]
where $\psi_t^\lambda:\hat{W}_1\to\hat{W}_1$ is the Liouville flow. Then,
\[F_{\phi_t}= e^{-t} F_\phi\circ \psi_t^\lambda\quad \text{and}\quad \rho(\phi_t,W_1,\lambda)=e^{-t}\rho(\phi, W_1,\lambda).\]
Consequently, $C(W_1,\phi_t)=e^{-t}C(W_1,\phi).$
\end{proposition}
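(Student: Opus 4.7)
My plan is to reduce the three identities to the scaling formula $(\psi_t^\lambda)^\ast\lambda = e^t\lambda$. To establish the latter, note that $\iota_{X_\lambda}\lambda = \iota_{X_\lambda}\iota_{X_\lambda}d\lambda = 0$ since the iterated contraction of a $2$-form vanishes, so by Cartan's formula $\mathcal{L}_{X_\lambda}\lambda = d(\iota_{X_\lambda}\lambda) + \iota_{X_\lambda}d\lambda = \lambda$. Integrating along the Liouville flow yields $(\psi_t^\lambda)^\ast\lambda = e^t\lambda$, with no additional exact form. The one place where some care is warranted is exactly this lack of a correction term; without the observation $\lambda(X_\lambda)\equiv 0$ the computation below would pick up unwanted primitive terms of the form $\int_0^t e^{t-\tau}\lambda(X_\lambda)\circ\psi_\tau^\lambda\,d\tau$.

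For the formula $F_{\phi_t} = e^{-t} F_\phi\circ\psi_t^\lambda$, I compute
\begin{align*}
\phi_t^\ast\lambda &= (\psi_t^\lambda)^\ast\phi^\ast(\psi_{-t}^\lambda)^\ast\lambda = e^{-t}(\psi_t^\lambda)^\ast\phi^\ast\lambda\\
&= e^{-t}(\psi_t^\lambda)^\ast(\lambda + dF_\phi) = \lambda + d\bigl(e^{-t}F_\phi\circ\psi_t^\lambda\bigr).
\end{align*}
Hence $F_{\phi_t}$ and $e^{-t}F_\phi\circ\psi_t^\lambda$ have the same differential, so they differ by a constant. Both vanish on $\partial W_1$ --- the right-hand side because $\psi_t^\lambda(\partial W_1) = \partial W_1\times\{e^t\}$ lies in the conical end outside $\OP{supp}\phi$, where $F_\phi$ is zero --- so the constant is $0$.

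For the support radius, $\phi_t(p) = p$ iff $\psi_t^\lambda(p)\notin\OP{supp}\phi$, giving $\OP{supp}\phi_t = \psi_{-t}^\lambda(\OP{supp}\phi)$. Because the Liouville flow acts on the extended collar $\partial W_1\times (0,\infty)$ by $(x,r')\mapsto(x,e^s r')$ and fixes the skeleton of $\hat W_1$ setwise, one has $\psi_s^\lambda(W_1^r) = W_1^{e^s r}$, so
\[ \rho(W_1,\phi_t) = \inf\bigl\{r:\OP{supp}\phi\subset W_1^{e^t r}\bigr\} = e^{-t}\rho(W_1,\phi). \]
Finally, since $F_\phi$ is zero off $W_1$ (extended by zero on the conical end), $\sup_{W_1}|F_{\phi_t}| = e^{-t}\sup_{W_1}|F_\phi|$, and combining this with the support radius scaling via \eqref{eq:constant} immediately gives $C(W_1,\phi_t) = e^{-t}C(W_1,\phi)$.
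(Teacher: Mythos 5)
Your proof is correct and follows essentially the same route as the paper's: establish $(\psi_t^\lambda)^\ast\lambda = e^t\lambda$ via Cartan's formula, pull back $\lambda$ through $\phi_t$ to identify $dF_{\phi_t}$ with $e^{-t}d(F_\phi\circ\psi_t^\lambda)$, and use $\psi_s^\lambda(W_1^r)=W_1^{e^s r}$ for the support-radius scaling. You merely make explicit a few steps the paper leaves implicit, such as the vanishing of the integration constant (both sides vanish on $\partial W_1$) and the final combination yielding $C(W_1,\phi_t)=e^{-t}C(W_1,\phi)$.
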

\begin{proof}
Denote $\psi^\lambda_t$ by $\psi_t$ for simplicity. The Cartan formula implies
\[\psi^\ast_t\lambda=e^t\lambda\quad \text{and}\quad \left(\psi^{-1}_t\right)^\ast\lambda=e^{-t}\lambda.\]
Therefore
\[\begin{split}
dF_{\phi_t}&=\phi^\ast_t\lambda-\lambda= \left(\psi^{-1}_t\circ\phi\circ\psi_t \right)^\ast\lambda-\lambda =\psi^\ast_t\phi^\ast \left(\psi_t^{-1}\right)^\ast\lambda-\lambda\\
&=e^{-t}\psi_t^\ast\left(\phi^\ast \lambda-\lambda\right) = e^{-t}d(F_\phi \circ\psi_t).
\end{split}\] 
Hence $F_{\phi_t}=e^{-t} F_\phi\circ\psi_t.$ Since
\[\OP{supp} \phi_t\subset W^r\quad\Longleftrightarrow\quad \OP{supp}\phi \subset W^{e^tr},\]
the equality $\rho(W_1,\phi_t)=e^{-t}\rho (W_1,\phi)$ holds.
\end{proof}

\subsection{Construction}\label{sec:construction}

\begin{definition}
Let $a$ and $b$ be as in Definition~\ref{def:Hstair}. \textbf{Transfer data} for $(\phi,W_1,W_2,a,b)$ is Floer data $(H,J)$ for $(\phi: W_2\to W_2, b)$ satisfying the following. There exists a Hamiltonian $\overline{H}\in\mathcal{H}_{\OP{stair}}(\phi,W_1,W_2,a,b)$ such that $H=\overline{H}$ on $\partial W_1\times [2\delta_1,1-\delta_2]$ and $\partial W_2\times [1+2\delta_3,\infty],$ and such that the action functional is positive at twisted Hamiltonian orbits in $W_1^{2\delta_1}$  and negative at the other twisted orbits (this holds for $H$ that is $C^2$-close to $\overline{H}$). Additionally, 
\[ dr\circ J_t= -\lambda \]
in $\partial W_1\times[2\delta_1,1-\delta_2].$
\end{definition}

\begin{lemma}
Let $(H,J)$ be a regular transfer data for $(\phi,W_1,W_2,a,b).$ By definition, there exist $\delta_1,\delta_2\in(0,1)$ such that $2\delta_1+\delta_2<1$ and $H_t(x,r)=b(r-2\delta_1)$ for all $(x,r)\in \partial W_1\times[2\delta_1,1-\delta_2).$ Let $G_t:\hat{W}_1\to\mathbb{R}$ be the Hamiltonian defined by
\[G_t(p):=\left\{ \begin{matrix} H_t(p)&\text{for }p\in W_1^{1-\delta_2}\\ b(r-2\delta_1)&\text{for }p\in \partial W_1\times[2\delta_1,\infty). \end{matrix}\right.\]
Then,
\[HF_\ast(W_1,\phi,G,J)=HF_\ast^{\geqslant 0}(W_2,\phi,H,J).\]
\end{lemma}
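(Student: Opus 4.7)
The plan is to prove the statement by exhibiting an identification of chain complexes $CF_\ast(W_1,\phi,G,J)=CF_\ast^{\geqslant 0}(W_2,\phi,H,J)$ --- matching generators, actions, and differentials. The technical workhorse throughout is the integrated maximum principle in the linear cylindrical region $\partial W_1\times[2\delta_1,1-\delta_2]$: the condition $dr\circ J=-\lambda$ imposed in the definition of transfer data, combined with linearity of the Hamiltonian with slope $b$, yields the subharmonicity identity $\Delta(r\circ u)=\abs{\partial_s u}^2\geqslant 0$ for any Floer trajectory $u$ passing through this region.

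First I match the generators. By the preceding lemma together with the transfer-data assumption, the positive-action $(\phi,H)$-twisted orbits on $\hat W_2$ are precisely the ones contained in $W_1^{2\delta_1}$. On this set $G=H$ and $\phi$ is unchanged, so the twisted Hamiltonian orbits coincide. Outside, on $\partial W_1\times[2\delta_1,\infty)\subset\hat W_1$, the Hamiltonian $G$ is linear of admissible slope $b$ with $\phi=\OP{id}$, so a $1$-periodic twisted orbit would force a Reeb orbit of period $b$ on $\partial W_1$, which does not exist. The actions also agree on these orbits: the Liouville form and the Hamiltonian restrict identically to $W_1^{2\delta_1}$, and the action potential $F_\phi$ is the same function in both ambient contexts because $\OP{supp}\phi\subset W_1^{\delta_1}$ forces both candidate normalizations (vanishing on $\partial W_1$ versus $\partial W_2$) to vanish off the support, so they solve the same boundary-value problem on $W_1^{\delta_1}$.

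For the differentials, after extending $J$ to $\hat W_1$ so that a contact-type condition persists on $\partial W_1\times[2\delta_1,\infty)$, the subharmonic estimate together with admissibility of $b$ and the strong maximum principle yields two no-escape conclusions: (a) any $(G,J)$-Floer trajectory on $\hat W_1$ between orbits in $W_1^{2\delta_1}$ stays in $W_1^{2\delta_1}$, which is clean since $G$ is linear on the entire semi-infinite cylinder $[2\delta_1,\infty)$; and (b) any $(H,J)$-Floer trajectory on $\hat W_2$ between positive-action orbits stays in $W_1^{1-\delta_2}$. Because $(G,J)$ and $(H,J)$ coincide on $W_1^{1-\delta_2}$, the two sets of counted Floer trajectories are literally the same maps, so the two differentials agree and the chain complexes are equal.

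The step I expect to be the main obstacle is (b): the linear collar in $\hat W_2$ has only finite width, so ruling out an excursion of $u$ across $\{r=1-\delta_2\}$ into region III/IV/V and back requires combining the subharmonic estimate in the collar with the specific stair-like structure of $H$ beyond $r=1-\delta_2$. This is the integrated maximum principle in the form of Abouzaid--Seidel / Ritter, and its applicability rests precisely on the admissibility of the slope $b$ and on the contact-type condition built into the definition of transfer data.
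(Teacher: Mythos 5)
Your proposal is correct and takes essentially the same approach as the paper: match the generators via the action splitting, then reduce equality of differentials to confinement of Floer trajectories, which both you and the paper obtain from the Abouzaid--Seidel integrated maximum principle (the paper simply cites Theorem~4.5(a) of Gutt~\cite{Gutt2015}, which is precisely the confinement result you describe). You spell out both containment directions and the role of the contact-type condition $dr\circ J=-\lambda$ in more detail than the paper's one-line reference, but the substance is the same.
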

\begin{proof}
The generators of the chain complexes $CF_\ast(W_1,\phi,G,J)$ and $CF_\ast^{\geqslant 0}(W_2,\phi,H,J)$ coincide. Therefore it is enough to prove the following. The solutions
\[u:\mathbb{R}^2\to\hat{W}_2,\quad \phi(u(s,t+1))=u(s,t)\] 
of the Floer equation which connect two Hamiltonian twisted orbits in $W_1$ satisfy $u\left(\mathbb{R}^2\right)\subset W_1.$ This follows from the part a) of Theorem~4.5 in \cite{Gutt2015}. See also \cite[Proposition~4.4]{Gutt2015}.
\end{proof}

We are going to define the transfer morphism in couple of steps. Definition~\ref{def:TMstep1} defines it for positive slopes with additional technical conditions. Definition~\ref{def:TMstep2} and Definition~\ref{def:TMstep3} eliminate the technical conditions. And, finally, Proposition~\ref{prop:TMcont} extends the transfer morphism to the case of infinite slopes.

\begin{definition}\label{def:TMstep1}
Let $a,b\in(0,\infty)$ be positive real numbers such that $a$ and $b$ are admissible with respect to  $W_2$ and $W_1,$ respectively, and such that $a<b.$ Let $b_0$ be the greatest period of some Reeb orbit on $\partial W_1$ that is smaller than $b.$ Assume
\[C(W_1,\phi)< \min\left\{b-b_0, b-a\right\}.\]
The \textbf{transfer morphism} is defined to be the map 
\[HF_\ast(W_2,\phi,a)\to HF_\ast(W_1,\phi,b),\]
induced by the natural map of chain complexes
\[CF_\ast(W_2,\phi, H,J)\to CF_\ast^{\geqslant 0}(W_2, \phi,H,J),\]
where $(H,J)$ is regular Transfer data for $(\phi,W_1,W_2,a,b).$
\end{definition}

\begin{definition}\label{def:TMstep2}
Let $a\in(0,\infty)$ be admissible with respect to $W_2,$ and let $b\in (0,\infty)$ be admissible with respect to $W_1.$ Assume $a<b.$ The transfer morphism is defined as the composition
\[
\begin{tikzcd}[row sep= large]
HF_\ast(W_2,\phi,a)\arrow[dashed]{r}\arrow{d}[swap]{I(\{\phi_t\})}& HF_\ast(W_1,\phi, b)\\
HF_\ast(W_2,\phi_1,a)\arrow{r}{T}& HF_\ast(W_1,\phi_1,b).\arrow{u}[swap]{I(\{\phi_t\})^{-1}}
\end{tikzcd} 
 \]
Here, 
\[ \phi_t:= (\psi_{ct}^\lambda)^{-1}\circ \phi\circ \psi_{ct}^\lambda ,\qquad t\in[0,1],\]
for $c\in(0,\infty)$ such that 
\[ C(W_1, \phi_1)<\min\{b-b_0, b-a\}, \]
$T$ is the transfer morphism of Definition~\ref{def:TMstep1}, and $I(\{\phi_t\})$ is the isomorphism furnished by isotopy $\{\phi_t\}$ (see Section~2.8 in~\cite{Uljarevic}). 
\end{definition}

\begin{lemma}
In the situation of Definition~\ref{def:TMstep2}, the transfer morphism does not depend on the choice of $c.$
\end{lemma}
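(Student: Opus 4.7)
Suppose $c_1 \leqslant c_2$ are two valid choices in Definition~\ref{def:TMstep2}. Write $\phi_s := (\psi^\lambda_s)^{-1}\circ \phi \circ \psi^\lambda_s$, so that the two definitions use the reparametrized isotopies $\{\phi_{c_j t}\}_{t\in[0,1]}$ from $\phi$ to $\phi_{c_j}$. By Proposition~\ref{pro:shrinking}, $C(W_1,\phi_s) = e^{-s} C(W_1,\phi)$ is non-increasing in $s$, hence every $\phi_s$ with $s\in [c_1,c_2]$ satisfies the constant bound required by Definition~\ref{def:TMstep1}. The plan is to reduce the lemma, via the composition rule for the isotopy isomorphism, to a naturality square for the transfer morphism of Definition~\ref{def:TMstep1}, and then to prove that square by a parametric Floer argument.

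First I would invoke the fact (from Section~2.8 of \cite{Uljarevic}) that $I(\{\cdot\})$ is a homotopy invariant and is compatible with concatenations. The isotopy $\{\phi_{c_2 t}\}$ is the concatenation, up to reparametrization and homotopy rel endpoints, of $\{\phi_{c_1 t}\}$ with $\{\phi_s\}_{s\in[c_1,c_2]}$, which gives $I(\{\phi_{c_2 t}\}) = J \circ I(\{\phi_{c_1 t}\})$ both on $HF_\ast(W_2,\cdot,a)$ and on $HF_\ast(W_1,\cdot,b)$, where $J := I(\{\phi_s\}_{s\in[c_1,c_2]})$. A short diagram chase then reduces the lemma to commutativity of
\begin{equation*}
\begin{tikzcd}
HF_\ast(W_2,\phi_{c_1},a) \arrow{r}{T_1} \arrow{d}[swap]{J} & HF_\ast(W_1,\phi_{c_1},b) \arrow{d}{J} \\
HF_\ast(W_2,\phi_{c_2},a) \arrow{r}{T_2} & HF_\ast(W_1,\phi_{c_2},b)
\end{tikzcd}
\end{equation*}
where $T_j$ is the transfer morphism of Definition~\ref{def:TMstep1} for $\phi_{c_j}$.

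To establish commutativity of this square I would choose a smooth family $(H_s,J_s)_{s\in[c_1,c_2]}$ of regular transfer data for $(\phi_s,W_1,W_2,a,b)$ which agrees at the endpoints with the data used to define $T_1$ and $T_2$. Such a family exists because $C(W_1,\phi_s)<\min\{b-b_0,b-a\}$ uniformly, so the parameters $\delta_1,\delta_2,\delta_3,A,B,C$ and profile functions $h_1,h_2,h_3$ of Definition~\ref{def:Hstair} can be chosen to depend smoothly on $s$. The associated $s$-dependent Floer equation defines a continuation chain map whose induced map on $HF_\ast(W_2,\cdot,a)$ is the isotopy isomorphism $J$. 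The key point is that, since $(H_s,J_s)$ is transfer data at every $s$, the splitting of twisted orbits into those with action in the positive range (orbits lying in $W_1^{2\delta_1}$) and those with action in the negative range (all others) persists uniformly along the path. The same maximum-principle and energy estimates that make the transfer morphism well defined in the first place (carried out parametrically) then force every continuation solution to have both ends in the same regime. Consequently the continuation chain map respects the action filtration, restricts to $CF^{<0}_\ast$, and descends to $CF^{\geqslant 0}_\ast$; on the latter quotient, under the identification with $CF_\ast(W_1,\phi_s,G_s,J_s)$ from the preceding lemma, it realises the isotopy isomorphism $J$ on the $W_1$-side. Commutativity of the square then follows from naturality of the quotient projection with respect to continuation.

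The hard part will be checking that this parametric family can genuinely be arranged so that the action separation and the no-escape property hold uniformly in $s$; once that is done, the chain-level naturality of the projection $CF_\ast \to CF^{\geqslant 0}_\ast$ gives the result. The crucial ingredient is the monotonicity $C(W_1,\phi_s) = e^{-s} C(W_1,\phi)$ from Proposition~\ref{pro:shrinking}, which guarantees that a common choice of stair-like profile (with slightly relaxed parameters) is admissible for all $\phi_s$ simultaneously, so the existence lemma for $\mathcal{H}_{\OP{stair}}$ produces transfer data at each $s$, and a generic perturbation ensures regularity of the parametric Floer equation.
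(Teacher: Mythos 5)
Your reduction to the commuting square with vertical arrows $J=I(\{\phi_s\}_{s\in[c_1,c_2]})$ and horizontal arrows $T_1,T_2$ (via concatenation invariance of the isotopy isomorphism) is sound, and parallels the paper's reduction to a $2\times 3$ diagram. Where you diverge is in proving that square: you propose realizing $J$ as a parametric continuation map along a smooth family of transfer data $(H_s,J_s)$ and arguing at chain level that this continuation respects the action filtration, whereas the paper recognizes $J$ as a naturality isomorphism $\mathcal{N}(K)$ for a Hamiltonian $K$ compactly supported in the interior of $W_1$, and then observes that such naturality converts transfer data for $\phi_1$ into transfer data for $\phi_2$, making the square commute essentially by inspection.

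The gap in your route is the setup of the parametric Floer equation itself. The twisting $\phi_s$ varies with $s$, so the twisted loop space $\Omega_{\phi_s}$ and the periodicity condition $\phi_s(u(\cdot,t+1))=u(\cdot,t)$ change along the family; there is no standard $s$-dependent Floer continuation in which the boundary twisting varies, and consequently your claim that the parametric solutions define a chain map whose induced map on homology equals $J$ is not justified. To even write a well-posed equation you would first need to conjugate by the Liouville flow (or equivalently by the flow of some compactly supported Hamiltonian) to fix the twisting --- but that conjugation is exactly the naturality isomorphism, and once you have it, the transfer-data-to-transfer-data observation finishes the proof without any Floer continuation. A secondary issue: even granting a well-defined parametric map, your claim that it "restricts to $CF^{<0}_\ast$" needs more than the uniform action separation you cite; one must control the curvature/energy term in the parametric estimate (or observe, as the paper effectively does, that the map is an explicit chain-level bijection on generators that matches the two splittings). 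In short, your approach is salvageable but must pass through naturality in any case, at which point the paper's shorter argument is the natural endpoint.
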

\begin{proof}
Let $c_1$ and $c_2$ be positive real numbers such that
\[ C(W_1, \phi_i)<\min\{b-b_0, b-a\},\qquad i\in\{1,2\}, \]
where 
\[ \phi_i:= (\psi_{c_i}^\lambda)^{-1}\circ \phi\circ \psi_{c_i}^\lambda,\qquad i\in\{1,2\}. \]
It is enough to prove that the diagram 
\[\begin{tikzcd}
HF_\ast(W_2,\phi_1,a)\arrow{r}& HF_\ast(W_1,\phi_1, b)\arrow{d}\\
HF_\ast(W_2,\phi,a)\arrow{u}\arrow{d}& HF_\ast(W_1,\phi, b)\\
HF_\ast(W_2,\phi_2, a)\arrow{r}& HF_\ast(W_1,\phi_2, b)\arrow{u}   
\end{tikzcd}\]
commutes, where the vertical arrows stand for the isomorphisms induced by symplectic isotopies. These isomorphisms, as it follows from their definition in \cite[Section~2.8]{Uljarevic}, coincide with naturality isomorphisms (see Section~2.7 in \cite{Uljarevic}) with respect to some Hamiltonians that are compactly supported in the interior of $W_1.$ Hence, it is enough to prove that the diagram
\[\begin{tikzcd} 
HF(W_2,\phi_1, a)\arrow{r}\arrow{d}[swap]{\mathcal{N}(H)} & HF(W_1,\phi_1, b)\arrow{d}{\mathcal{N}(K)}\\
HF(W_2,\phi_2,a)\arrow{r}& HF(W_1,\phi_2,b) 
\end{tikzcd}\]
commutes, where $H,K:\hat{W}_2\to \mathbb{R}$ are compactly supported in the interior of $W_1,$  and $\mathcal{N}(H), \mathcal{N}(K)$ denote the corresponding naturality isomorphisms. This follows from the following fact.  The naturality with respect to a Hamiltonian $\hat{W}_2\to\mathbb{R}$ that is compactly supported in the interior of $W_1$ converts transfer data for $(\phi_1, W_1, W_2,a,b)$ into transfer data for $(\phi_2, W_1, W_2, a, b).$ 
\end{proof}

\begin{proposition}\label{prop:TMcont}
The diagram
\[ \begin{tikzcd}
HF_\ast(W_2,\phi,a)\arrow{r}\arrow{d}& HF_\ast(W_1,\phi,b)\arrow{d}\\
HF_\ast(W_2,\phi,a')\arrow{r}&HF_\ast(W_1,\phi,b'),
\end{tikzcd} \]
consisting of transfer morphisms and continuation maps, commutes whenever $a,b,a',b'\in(0,\infty)$ are such that all the maps in the diagram are well defined.
\end{proposition}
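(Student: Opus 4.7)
The plan is to establish the commutativity on the chain level by constructing a single monotone homotopy of transfer data whose associated continuation map respects the action filtration, after first reducing to the setting of Definition~\ref{def:TMstep1}. For the reduction, Proposition~\ref{pro:shrinking} allows us to pick a single $c>0$ large enough that
\[C(W_1,\phi_1)<\min\{b-b_0,\,b-a,\,b'-b'_0,\,b'-a'\},\]
so that Definition~\ref{def:TMstep1} applies to both transfer morphisms in the diagram with the same rescaling $\phi_1=(\psi^\lambda_c)^{-1}\circ\phi\circ\psi^\lambda_c$. Since the isotopy-induced naturality isomorphisms $I(\{\phi_t\})$ commute with continuation maps by construction, the general case reduces to the setting of Definition~\ref{def:TMstep1}.

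In that setting, I would choose regular transfer data $(H^-,J^-)$ for $(\phi,W_1,W_2,a,b)$ and $(H^+,J^+)$ for $(\phi,W_1,W_2,a',b')$, and by tuning the constants $A,B,C,\delta_1,\delta_2,\delta_3$ allowed in Definition~\ref{def:Hstair}, arrange common breakpoints $\delta_i$ and an $s$-family $\bar H^s\in\mathcal{H}_{\OP{stair}}$ of stair-like models with $\bar H^s=\bar H^\mp$ for $\pm s$ large and $\partial_s\bar H^s\geqslant 0$ pointwise. Monotonicity is ensured because raising the slopes $a$, $b$ and the plateau $B$, and lowering $A$, all push $\bar H^s$ upward. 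A small regular perturbation yields Floer data $(H^s,J^s)$ with $\partial_s H^s\geqslant 0$, still of stair-like type at each $s$, and satisfying $dr\circ J^s_t=-\lambda$ on $\partial W_1\times[2\delta_1,1-\delta_2]$.

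The maximum principle stemming from the slope structure of $H^s$ both at infinity and along $\partial W_1$ ensures that the continuation map $\Phi\colon CF_\ast(W_2,\phi,H^-,J^-)\to CF_\ast(W_2,\phi,H^+,J^+)$ is well defined. In the sign convention of $\mathcal{A}_{\phi,H}$ used in this paper, the standard energy identity
\[\mathcal{A}_{\phi,H^+}(\gamma^+)-\mathcal{A}_{\phi,H^-}(\gamma^-)=-\int\left(\abs{\partial_s u}^2+(\partial_s H^s)(u)\right)ds\,dt\leqslant 0\]
holds for every continuation trajectory $u$ from $\gamma^-$ to $\gamma^+$. Hence $\Phi$ sends $CF_\ast^{<0}$ into $CF_\ast^{<0}$ and descends to a chain map $\bar\Phi$ between the quotients $CF_\ast^{\geqslant 0}$, producing a commutative square of chain complexes whose horizontal arrows are the projections inducing the transfer morphisms and whose vertical arrows are $\Phi$ and $\bar\Phi$.

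It remains to identify $\bar\Phi$ with the continuation map $HF_\ast(W_1,\phi,b)\to HF_\ast(W_1,\phi,b')$, which proceeds exactly as in the lemma preceding Definition~\ref{def:TMstep1}: by \cite[Proposition~4.4 and Theorem~4.5(a)]{Gutt2015}, every continuation trajectory whose asymptotic orbits lie in $W_1$ is confined to $W_1$, so $\bar\Phi$ counts precisely the trajectories defining the $W_1$-continuation map. Passing to homology then yields the claimed commutative diagram. The main obstacle is the construction of the monotone stair-like family $\bar H^s$ itself, which must simultaneously respect all the inequalities of Definition~\ref{def:Hstair} and the action-sign separation between twisted orbits in $W_1^{2\delta_1}$ and the rest for every $s$; once this is in place the argument is standard Floer theory.
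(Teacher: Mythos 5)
The paper itself does not give an argument for this proposition: the ``proof'' environment following the proposition merely refers to the proof of Proposition~4.7 in \cite{Gutt2015}, so your proposal is essentially a detailed implementation of the argument that the paper defers to Gutt. The overall scheme---reduce to Definition~\ref{def:TMstep1} by conjugating with a single Liouville rescaling, then build a monotone homotopy of stair-like Hamiltonians whose continuation map respects the action filtration, descend to the quotient $CF^{\geqslant 0}$, and identify the quotient map with the $W_1$-continuation map via the confinement results from \cite{Gutt2015}---is correct and is exactly the standard route. Your energy identity (with the sign convention of $\mathcal{A}_{\phi,H}$ used here, where the boundary terms coming from $-\int u^\ast\lambda$ and $-F_\phi(u(\cdot,1))$ cancel by the twist condition and exactness of $\phi$) is correct.

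Two small remarks. First, you overstate the difficulty in the final paragraph: the action-sign separation between orbits in $W_1^{2\delta_1}$ and the rest is \emph{not} needed at intermediate values of $s$, only at $s=\pm\infty$, since the filtration on the continuation chain map is controlled solely by the energy identity together with monotonicity $\partial_s H^s\geqslant 0$; the intermediate $\bar H^s$ need not even lie in $\mathcal{H}_{\OP{stair}}$ as long as they retain the linear slope structure at infinity and on $\partial W_1\times[2\delta_1,1-\delta_2]$ so that the maximum-principle and confinement arguments apply. This makes the construction of the homotopy easier than you suggest. Second, your reduction only explicitly treats Definition~\ref{def:TMstep2}; the limiting case $a=b$ (or $a'=b'$) of Definition~\ref{def:TMstep3}, where the transfer morphism is postcomposed with the inverse of a continuation isomorphism, should be mentioned, although it follows immediately from the case $a<b$ together with the fact that that auxiliary continuation map is itself an isomorphism.
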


\begin{proof}[Proof of Theorem~\ref{thm:transfermorphism}]
The proof follows from the proof of Proposition~4.7 in \cite{Gutt2015}.
\end{proof}

\begin{definition}\label{def:TMstep3}
Let $a\in(0,\infty)$ be admissible with respect to both $W_1$ and $W_2.$ Let $\varepsilon>0$ be a positive number such that the elements of $[a, a+\varepsilon]$ are all admissible with respect to $W_1.$ The transfer morphism
\[ HF_\ast(W_2,\phi,a)\to HF_\ast(W_1,\phi,a) \]
is defined to be the composition of the transfer morphism
\[ HF_\ast(W_2,\phi,a)\to HF_\ast(W_1,\phi, a+\varepsilon) \]
of Definition~\ref{def:TMstep2} and the inverse of the continuation map
\[ \Phi^{-1}\::\: HF_\ast(W_1,\phi,a+\varepsilon)\to HF_\ast (W_1,\phi,a). \]
\end{definition}

\begin{lemma}
The transfer morphism of Definition~\ref{def:TMstep3} is well defined and does not depend on the choice of $\varepsilon.$ 
\end{lemma}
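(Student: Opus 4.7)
The plan is to verify two points: first, that the continuation map
\[ \Phi \colon HF_\ast(W_1,\phi,a) \to HF_\ast(W_1,\phi,a+\varepsilon) \]
is an isomorphism (so that $\Phi^{-1}$ is meaningful); second, that the composition $\Phi^{-1}\circ T$ is independent of the chosen $\varepsilon$. (That some admissible $\varepsilon>0$ exists at all follows from the closedness of the period spectrum of $\left.\lambda\right|_{\partial W_1}$ together with $a$ not being a period.)

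For the first point, the key observation is that, since no element of $[a,a+\varepsilon]$ is a period of a Reeb orbit on $\partial W_1$, one can pick Floer data for $(\phi,a)$ and $(\phi,a+\varepsilon)$ built from Hamiltonians that agree on a sufficiently large compact subset of $\hat W_1$ and whose linear parts, of respective slopes $a$ and $a+\varepsilon$, admit no $\phi$-twisted $1$-periodic orbits in the cylindrical end. A monotone $s$-dependent interpolation between them, whose slope remains in $[a,a+\varepsilon]$ for every $s$, likewise has no twisted orbits at infinity. A standard maximum principle of the type used in \cite[Theorem~4.5]{Gutt2015} (already invoked in Section~\ref{sec:construction}) then confines all continuation trajectories to the region on which the two Hamiltonians coincide, so the continuation chain map is a bijection on generators. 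Hence $\Phi$ is a chain-level isomorphism and, a fortiori, an isomorphism on homology.

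For the second point, let $\varepsilon_1<\varepsilon_2$ be two valid choices. Since $[a,a+\varepsilon_2]$ is period-free, so is its subinterval $[a+\varepsilon_1,a+\varepsilon_2]$; by the argument above, the continuation map
\[ \Phi_{12}\colon HF_\ast(W_1,\phi,a+\varepsilon_1)\to HF_\ast(W_1,\phi,a+\varepsilon_2) \]
is an isomorphism as well. Write $T_{\varepsilon_i}$ for the transfer morphism of Definition~\ref{def:TMstep2} associated to the pair $(a,a+\varepsilon_i)$ and $\Phi_{a,\varepsilon_i}$ for the continuation $HF_\ast(W_1,\phi,a)\to HF_\ast(W_1,\phi,a+\varepsilon_i)$. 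Applying Proposition~\ref{prop:TMcont} with $a'=a$, $b=a+\varepsilon_1$, $b'=a+\varepsilon_2$ yields commutativity of
\[
\begin{tikzcd}
HF_\ast(W_2,\phi,a)\arrow{r}{T_{\varepsilon_1}}\arrow{d}{\OP{id}} & HF_\ast(W_1,\phi,a+\varepsilon_1)\arrow{d}{\Phi_{12}}\\
HF_\ast(W_2,\phi,a)\arrow{r}{T_{\varepsilon_2}} & HF_\ast(W_1,\phi,a+\varepsilon_2),
\end{tikzcd}
\]
that is, $T_{\varepsilon_2}=\Phi_{12}\circ T_{\varepsilon_1}$. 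Functoriality of continuation maps gives $\Phi_{a,\varepsilon_2}=\Phi_{12}\circ \Phi_{a,\varepsilon_1}$, and combining these identities,
\[ \Phi_{a,\varepsilon_2}^{-1}\circ T_{\varepsilon_2}=\Phi_{a,\varepsilon_1}^{-1}\circ \Phi_{12}^{-1}\circ \Phi_{12}\circ T_{\varepsilon_1}=\Phi_{a,\varepsilon_1}^{-1}\circ T_{\varepsilon_1}, \]
as required.

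The main obstacle is the first point: verifying that the continuation map is an isomorphism when the slope interval contains no Reeb periods. This is standard for symplectic cohomology but must be reworked for $\phi$-twisted loops rather than genuine $1$-periodic loops; the maximum-principle statement in the version of \cite[Theorem~4.5]{Gutt2015} already invoked in the paper is what makes this confinement argument go through here.
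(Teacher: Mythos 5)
Your proof and the paper's are structured identically: existence of $\varepsilon$ via closedness of the period spectrum (the paper cites Lemma~\ref{lem:periodsclosed}), invertibility of the continuation map $\Phi$, and independence of $\varepsilon$ via Proposition~\ref{prop:TMcont}. Your second and third points reproduce the paper's argument exactly (the paper in fact says nothing more than ``Independence of the choice of $\varepsilon$ follows from Proposition~\ref{prop:TMcont}''; your explicit chase with $a'=a$, $b=a+\varepsilon_1$, $b'=a+\varepsilon_2$ and the cocycle relation $\Phi_{a,\varepsilon_2}=\Phi_{12}\circ\Phi_{a,\varepsilon_1}$ is the correct expansion of that remark).

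Where you genuinely diverge is the first point. The paper simply cites \cite[Lemma~2.25]{Uljarevic} for the invertibility of $\Phi$, while you sketch a direct proof via the confinement/maximum-principle mechanism. This is a legitimate alternative that makes the lemma self-contained (it is, in effect, re-deriving the cited result). However the phrase ``so the continuation chain map is a bijection on generators. Hence $\Phi$ is a chain-level isomorphism'' skips a step: confinement tells you that all continuation trajectories stay in the region where the Floer data is $s$-independent, and to conclude that the continuation map is the identity (not merely some bijection of generating sets) you should add that on that region the continuation equation reduces to the $s$-independent Floer equation, whose only isolated index-$0$ solutions are the constant trajectories — nonconstant ones sweep out at least a $1$-parameter $\mathbb{R}$-family and cannot be rigid. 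With that sentence added, your version is a correct and more explicit account of what the paper delegates to the reference.
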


\begin{proof}
Lemma~\ref{lem:periodsclosed} implies that $\varepsilon$ from Definition~\ref{def:TMstep3} exists. Due to \cite[Lemma~2.25]{Uljarevic}, the continuation map
\[ \Phi \: :\: HF_\ast(W_1,\phi,a)\to HF_\ast (W_1,\phi, a+\varepsilon) \]
is an isomorphism. Hence, its inverse $\Phi^{-1}$ is well defined. Independence of the choice of $\varepsilon$ follows from Proposition~\ref{prop:TMcont}.
\end{proof}

Proposition~\ref{prop:TMcont} enables us to extend the transfer morphism to the case of infinite slopes. The proposition is still true after the extensions of the transfer morphism.

\begin{proof}[Proof of Theorem~\ref{thm:transfermorphism}]
The existence of the transfer morphism has already been shown above. It remains to prove that the diagram \eqref{eq:diagram} commutes. This follows from the proof of Proposition~4.7 in \cite{Gutt2015}.
\end{proof}

\section{Applications}

\subsection{Iterated ratio}

\begin{definition}\label{def:iteratedratio}
Let $\phi: W\to W$ be an exact symplectomorphism of a Liouville domain $W.$ The \textbf{iterated ratio} $\kappa(W,\phi)$ is defined to be the number
\[\kappa(W,\phi):=\limsup_{m\to \infty}\frac{\dim HF(W,\phi^m,\varepsilon)}{m}.\]
Here $\varepsilon>0$ is small enough (smaller than any positive period of some Reeb orbit on $\partial W$). 
\end{definition}

\begin{remark}
The iterated ratio is invariant under compactly supported symplectic isotopies. It measures the linear growth rate of $\dim HF(W,\phi^m,\varepsilon).$ There is yet another similar invariant, the Floer theoretic entropy,
\[h_{Floer}(W,\phi):=\limsup_{m\to\infty} \frac{\log\left(\dim HF(W,\phi^m,\varepsilon)\right)}{m},\]
which measures the exponential growth rate of $\dim HF(W,\phi,\varepsilon).$ If $W$ is a surface and $\phi$ its area-preserving diffeomorphism that has a pseudo-Anosov component, then $h_{Floer}(W,\phi)>0$ \cite[page 167]{MR2929086}. Consequently, $\kappa(W,\phi)$ is equal to infinity.
\end{remark}

\begin{proof}[Proof of Theorem~\ref{thm:ambient}]
Let $\varepsilon>0$ be small. It is enough to prove that the number
\[\abs{\dim HF(W_1,\phi^m,\varepsilon)-\dim HF(W_2,\phi^m,\varepsilon)}\]
is bounded (by a constant not depending on $m$). Let $\psi_m:W_1\to W_1$ be an exact symplectomorphism that is isotopic to $\phi^m$ through exact symplectomorphisms, and such that $C(W_1,\psi_m)<\frac{\varepsilon}{2}.$ Such a symplectomorphism exists for each $m\in\mathbb{N}$ due to Proposition~\ref{pro:shrinking}. Choose transfer data $(H^m,J^m)$ for $(\psi_m,W_1,W_2,\frac{\varepsilon}{2},\varepsilon),$ such that $H^m=H^1$ and $J^m=J^1$ on $W_2\setminus W_1.$ The short exact sequence
\[0\to CF_\ast^{<0}(W_2,\psi_m,H^m, J^m)\to CF_\ast(W_2,\psi_m,H^m, J^m)\to CF_\ast^{\geqslant 0}(W_2,\psi_m,H^m, J^m)\to 0\]
of chain complexes induces the long exact sequence in homology
\[\cdots\to HF_\ast^{<0}(W_2,\psi_m,H^m, J^m)\to HF_\ast(W_2,\psi_m,H^m, J^m)\to HF_\ast^{\geqslant 0}(W_2,\psi_m,H^m, J^m)\to \cdots\]
By applying identifications as in Section~\ref{sec:construction}, the long exact sequence is transformed into
\begin{equation}\label{eq:lsequence}
\cdots\to HF_\ast^{<0}(W_2,\psi_m,H^m, J^m)\to HF_\ast\left(W_2,\psi_m,\frac{\varepsilon}{2}\right)\to HF_\ast(W_1,\psi_m,\varepsilon)\to \cdots
\end{equation}
The chain complexes $CF_\ast^{<0}(W_2,\psi_m, H^m,J^m), m\in\mathbb{N}$ are generated by twisted orbits contained in $W_2\setminus W_1.$ In this region $\psi_m=\OP{id}$ and $(H^m,J^m)=(H^1,J^1).$ Hence 
\[CF_\ast^{<0}(W_2,\psi_m, H^m,J^m),\quad m\in\mathbb{N}\]
are generated by the same set of generators. This does not imply that 
\[HF_\ast^{<0}(W_2,\psi_m, H^m,J^m),\quad m\in\mathbb{N}\] 
are isomorphic, because the differentials may differ. However,
\[\dim HF_\ast^{<0}(W_2,\psi_m, H^m, J^m)\leqslant \dim CF_\ast^{<0}(W_2,\psi_m, H^m, J^m)= \dim CF_\ast^{<0}(W_2,\psi_1, H^1, J^1),\]
and the number on the right-hand side does not depend on $m.$ The long exact sequence \eqref{eq:lsequence} implies
\[\abs{\dim HF\left(W_2,\psi_m,\frac{\varepsilon}{2}\right)-\dim HF(W_1,\psi_m,\varepsilon)}< 2\dim CF^{<0}(W_2,\psi_1,H^1,J^1).\]
Finally, note that
\[HF\left(W_2,\psi_m,\frac{\varepsilon}{2}\right)\cong HF\left(W_2, \phi^m,\frac{\varepsilon}{2}\right)\cong HF(W_2, \phi^m,\varepsilon), \]
and 
\[HF(W_1,\psi_m,\varepsilon)\cong HF(W_1,\phi^m,\varepsilon).\]
This finishes the proof.
\end{proof}
The following proposition shows that symplectomorphisms which are supported in the cylindrical part of a Liouville domain have finite iterated ratio. This proposition will not be used in the rest of the paper. 

\begin{proposition}
Let $(W,\lambda)$ be a Liouville domain, and let $\phi:W\to W$ be an exact symplectomorphism that is compactly supported in $\partial W\times (0,1).$ Then
\[\kappa(W,\phi)<\infty.  \]
\end{proposition}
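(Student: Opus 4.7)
The plan is to bound $\dim HF(W,\phi^m,\varepsilon)$ linearly in $m$ by applying the stair-like Hamiltonian machinery of Section~\ref{sec:construction} to each iterate $\phi^m$ and then counting the resulting twisted orbits by action.

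Fix $\epsilon_1,\epsilon_2>0$ with $\OP{supp}(\phi^m)\subset\partial W\times[\epsilon_1,1-\epsilon_2]$ for every $m\geqslant 1$. Choose $\delta\in(1-\epsilon_2,1)$ and set $W_1:=W^\delta$ and $W_2:=W$; for $\delta_1>0$ small enough, the support of $\phi$ lies in the interior of $W_1^{\delta_1}$, so each $\phi^m|_{W_1}$ is compactly supported deep inside $W_1$ as required by Definition~\ref{def:Hstair}. For each $m$, take transfer data $(H^m,J^m)$ associated to a stair-like Hamiltonian in $\mathcal{H}_{\OP{stair}}(\phi^m,W_1,W_2,\varepsilon/2,b)$, arranged so that outside $\OP{supp}(\phi)$ the pair $(H^m,J^m)$ coincides with a fixed pair $(H^\circ,J^\circ)$ independent of $m$. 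As in the action-range analysis following Definition~\ref{def:Hstair}, the generators of $CF_\ast(W,\phi^m,H^m,J^m)$ decompose by regions I--V: in regions II--V and in the cylindrical tail either $\phi^m=\OP{id}$ or the Hamiltonian lies in a fixed transition piece, so the total count there is bounded by a constant $N$ independent of $m$; the only potentially $m$-dependent contribution comes from region I $=W_1^{\delta_1}$, where generators are constants at fixed points of $\phi^m$.

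For a region-I generator $\gamma\equiv p$, the action is $\mathcal{A}_{\phi^m,H^m}(\gamma)=A-F_{\phi^m}(p)$, and the cocycle identity $F_{\phi^m}=\sum_{k=0}^{m-1} F_\phi\circ\phi^k$ gives $\norm{F_{\phi^m}}_\infty\leqslant m\norm{F_\phi}_\infty$, confining the action to an interval of width $O(m)$. The remaining key estimate is that the count of region-I generators is also $O(m)$; this should follow from a Conley--Zehnder action--index bound for twisted orbits in the compact region $\OP{supp}(\phi)\subset\partial W\times[\epsilon_1,1-\epsilon_2]$, to the effect that the Conley--Zehnder index of a nondegenerate twisted orbit at a fixed point of $\phi^m$ grows linearly with its action, so that the count per unit action in any fixed degree is bounded. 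Summing, $\dim CF_\ast(W,\phi^m,H^m,J^m)\leqslant Cm+N'$, whence $\kappa(W,\phi)\leqslant C<\infty$. The principal obstacle is precisely this action--index step: one must leverage the symplectization geometry of $\partial W\times(0,1)$ to rule out the exponential orbit proliferation that would otherwise defeat the linear bound.
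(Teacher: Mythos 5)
Your reduction to counting generators is essentially sound up to a point: with a Hamiltonian of small slope, the chain complex is generated by constants at fixed points of $\phi^m$ (in the interior) together with a bounded number of orbits in the cylindrical collar and outside $W_1$, so everything does hinge on showing that the number of fixed points of $\phi^m$ that can be realized grows at most linearly in $m$. The problem is that you have no argument for this, and the ``action--index'' heuristic you invoke does not deliver it. The number of fixed points of $\phi^m$ need not be $O(m)$ in general; it can grow exponentially for area-preserving maps, which is exactly the phenomenon the paper points to in the remark about pseudo-Anosov components and $h_{Floer}>0$. Moreover, even granting an action--index linearity, that would at best bound the \emph{range} of Conley--Zehnder indices appearing, not the number of generators per degree, and $\dim HF$ sums over degrees. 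So the crucial estimate is not merely ``a remaining technical step'' — it is the entire content of the proposition, and the transfer/stair-like framework does nothing to produce it.

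The paper's actual argument uses the hypothesis $\OP{supp}\phi\subset\partial W\times(0,1)$ in a much more structural way, which your proposal never exploits. Conjugating $\phi$ by the Liouville flow $\psi^\lambda_t$ produces a copy of $\phi$ supported in an arbitrarily thin shell $\partial W\times(r_0,r_0e^c)$, and by further translating along the Liouville direction one obtains $m$ conjugates $\varphi_0,\dots,\varphi_{m-1}$ with \emph{pairwise disjoint} supports. Each $\varphi_j$ is symplectically isotopic to $\phi$ rel boundary, hence $\psi_m:=\varphi_0\circ\cdots\circ\varphi_{m-1}$ is isotopic to $\phi^m$ rel boundary, so $\dim HF(W,\phi^m,\varepsilon)=\dim HF(W,\psi_m,\varepsilon)$. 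Since the $\varphi_j$ have disjoint supports, a Hamiltonian can be tailored so that the twisted orbits of $\psi_m$ decompose as a fixed number $A$ in the inner region plus exactly $B$ per shell, giving the a priori linear bound $\dim HF(W,\psi_m,\varepsilon)\leqslant A+mB$ at the chain level. This ``disjointification of iterates'' via the Liouville flow is the geometric mechanism that rules out exponential orbit proliferation; it is available precisely because $\phi$ lives in the symplectization collar, and your proof never uses it. Without it, your bound $\dim CF_\ast\leqslant Cm+N'$ is an unproven assertion, not a consequence of anything you have established.
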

\begin{proof}
The idea of the proof is the following. We construct an exact symplectomorphism $\psi_m: W\to W$ that is symplectically isotopic to $\phi^m$ relative to the boundary, and such that it consists of $m$ ``copies'' of $\phi$ with disjoint supports. By choosing a suitable Hamiltonian, one can ensure that each ``copy'' contributes the same number of generators to the Floer chain complex. Since the number of ``copies'' grows linearly with respect to $m,$ the iterated ratio has to be smaller than the number of generators furnished by a single ``copy.''

Now, we implement the idea rigorously. By the assumption, there exists $ \delta\in \left(0,  \frac{1}{2}\right) $ such that $\phi$ is compactly supported in $\partial W\times (2\delta, 1).$ Let $H_t: \hat{W}\to\mathbb{R}$ be a Hamiltonian equal to 
\[ (x,r)\mapsto \varepsilon r \]
on $\partial W\times (\delta, 2\delta) \cup \partial W\times (1,\infty)$ such that
\[ H_{t+1}= H_t\circ \phi, \]
and such that
\[ \det\left( d(\phi \circ \psi_1^H)(x)-\OP{id} \right)\not=0 \]
for all fixed points $x$ of $\phi \circ \psi_1^H.$ The non-degeneracy condition implies that there are finitely many Hamiltonian twisted orbits. We denote by $A$ and $B$ the numbers of such orbits in $W^\delta$ and $\partial W\times (2\delta, 1),$ respectively. 

Fix $m\in \mathbb{N}.$ Denote by $\varphi _0 : W\to W$ the symplectomorphism 
\[ \varphi_0:= \psi^\lambda_T\circ \phi \circ \left( \psi^\lambda_T\right)^{-1}, \]
where $T:= (m-1)\ln \delta - m\ln (1+\delta), $ and $\psi^\lambda_t$ is the Liouville flow. Let $\varphi_j,\: j\in\{1,\ldots, m-1\}$ be the symplectomorphism given by
\[ \varphi_j:= \psi^\lambda_{j c}\circ \varphi_0\circ \left( \psi_{jc}^\lambda \right)^{-1}, \]
where $c := \ln(1+\delta)-\ln\delta.$ Similarly, denote
\begin{align*}
H^0_t&:= H_t\circ \left( \psi^\lambda_T\right)^{-1},\\
H^j_t&:= H^0_t \circ \left( \psi^\lambda_{jc}\right)^{-1},\quad j\in\{1,\ldots, m-1\}.
\end{align*}
The symplectomorphisms $\varphi_0, \varphi_1, \ldots, \varphi_{m-1}$ are all symplectically isotopic to $\phi$ relative to the boundary, and they are compactly supported in 
\begin{equation}
\label{eq:copies}
\partial W\times \left(r_0, r_0 e^{c}\right),\quad  \partial W\times \left(r_0e^{c}, e_0 e^{2c}\right),\quad \ldots,\quad \partial W\times \left(r_0e^{(m-1)c}, r_0 e^{mc} \right) 
\end{equation}
respectively. Here, 
\[ r_0:=\frac{\delta^m}{(1+\delta)^m}, \]
and $c$ is as above. (It follows that $r_0 e^{mc}=1$ and consequently the sets \eqref{eq:copies} are all subsets of $W$. Hence, $\varphi_j$ is compactly supported in the interior of $W$ for all $j\in\{0,\ldots, m-1\}.$) Let $\psi_m: W\to W$ be the exact symplectomorphism 
\[ \psi_m:= \varphi_0\circ \cdots\circ \varphi_{m-1}, \]
and let $G_t: \hat{W}\to \mathbb{R}$ be the Hamiltonian defined by
\[ G_t(p):=\left\{ \begin{matrix}
H^0_t(p) & \text{for }p\in W^{r_0},\\
H^j_t(p) &\text{for }p\in \partial W\times \left( r_0 e^{(j-1)c}, r_0 e^{jc} \right), j\in\{1,\ldots, m-1\}\\
\varepsilon r &\text{otherwise.}
\end{matrix} \right. \]
By construction, $G_t$ has exactly $A+ mB$ Hamiltonian $\psi_m$-twisted loops, and they are all non-degenerate. This implies
\[ \dim HF (W,\phi^m, \varepsilon)=\dim HF (W,\psi_m, \varepsilon)\leqslant A+mB. \]
The first equality follows from $\phi^m$ being symplectically isotopic to $\psi_m$ relative to the boundary. Hence,
\[ \kappa(W,\phi)\leqslant \lim_{m\to \infty} \frac{A+ mB}{m}= B <\infty, \] 
and the proof is finished. 
\end{proof}
\subsection{Examples}
In this section, we compute the iterated ratio for the fibered Dehn twist $\tau : D^\ast\mathbb{S}^n\to D^\ast\mathbb{S}^n$ \cite{MR1765826,BiranGiroux} of the disk cotangent bundle of the sphere $\mathbb{S}^n$ endowed with the standard Riemannian metric. The Floer homology $HF(D^\ast\mathbb{S}^n,\tau^m,\varepsilon),$ $m\in\mathbb{N},$ is isomorphic (as a $\mathbb{Z}_2$-vector space) to $HF(D^\ast\mathbb{S}^n,\OP{id}, 2\pi m+\varepsilon).$ This follows from Proposition~2.30 and Remark~3.2 in \cite{Uljarevic}. Hence
\[ \kappa(D^\ast\mathbb{S}^n,\tau)=\lim_{m\to \infty}\frac{\dim HF(D^\ast\mathbb{S}^n,\OP{id}, 2\pi m+\varepsilon)}{m}. \]

\begin{proposition}\label{pro:sphereslopes}
Let $m$ and $n$ be positive integers with $n>2,$ let $\varepsilon\in(0,2\pi),$ and let $D^\ast\mathbb{S}^n$ be the disk cotangent bundle of $\mathbb{S}^n$ with respect to the standard Riemannian metric. Then,
\[ HF_k(D^\ast\mathbb{S}^2, 2\pi m+\varepsilon)\cong \left\{\begin{matrix} \mathbb{Z}_2& \text{for }k\in\{0,1,2m+1,2m+2\},\\
\mathbb{Z}^2_2&\text{for }k\in\mathbb{N} \text{ and } 2\leqslant k\leqslant 2m,\\
0&\text{otherwise}, 
\end{matrix} \right. \]
and
\[ HF_k(D^\ast\mathbb{S}^n, 2\pi m+\varepsilon)\cong \left\{\begin{matrix} \mathbb{Z}_2& \text{for }k\in\{\ell(n-1), \ell(n-1)+n\::\:\ell\in\mathbb{Z}\:\&\:0\leqslant\ell\leqslant2m\},\\
0&\text{otherwise.}
\end{matrix} \right. \]
In particular, 
\[ \dim HF_k(D^\ast\mathbb{S}^n, 2\pi m+\varepsilon)= \dim HF_k(D^\ast\mathbb{S}^2, 2\pi m+\varepsilon)= 4m+2. \]
\end{proposition}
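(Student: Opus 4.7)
The plan is to compute $HF_\ast(D^\ast\mathbb{S}^n, \OP{id}, 2\pi m + \varepsilon)$ by a Morse--Bott calculation, exploiting that on the round sphere $(\mathbb{S}^n, g_0)$ the Reeb flow on $\partial D^\ast\mathbb{S}^n = ST^\ast\mathbb{S}^n$ is totally periodic with minimal period $2\pi$. I would pick an autonomous radial Hamiltonian $H(x, r) = h(r)$ on $\widehat{D^\ast\mathbb{S}^n}$ with $h$ convex, minimized at some small $r_0$, and with $h'$ increasing monotonically to $2\pi m + \varepsilon$, crossing each value $2\pi k$ for $k = 1, \ldots, m$ transversally. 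The critical set of $\mathcal{A}_H$ is then Morse--Bott, with components a copy of $\mathbb{S}^n$ formed by the constant orbits at $r_0$ and, for each $k = 1, \ldots, m$, a copy of $ST^\ast\mathbb{S}^n$ formed by the $1$-periodic orbits winding $k$ times around great circles.

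Next I would apply a standard Morse--Bott perturbation scheme (e.g.\ as in Bourgeois--Oancea), perturbing $H$ by a small time-dependent term built from Morse functions on each critical manifold, to obtain regular Floer data. In the resulting complex, each critical manifold $M$ with Morse--Bott Conley--Zehnder index $\mu$ contributes $H_\ast(M; \mathbb{Z}_2)$ shifted by $\mu$. The standard iteration formula for the round sphere yields $\mu_0 = 0$ for the constants and $\mu_k = (2k-1)(n-1)$ for the $k$-th iterate (verified directly from the Morse index theorem: conjugate points along a great circle on $\mathbb{S}^n$ occur at integer multiples of $\pi$, each with multiplicity $n-1$). The relevant mod-$2$ homologies are $H_\ast(\mathbb{S}^n; \mathbb{Z}_2)$, of rank $1$ in degrees $0$ and $n$, and $H_\ast(ST^\ast\mathbb{S}^n; \mathbb{Z}_2) \cong H_\ast(\mathbb{S}^n) \otimes H_\ast(\mathbb{S}^{n-1})$---which holds because $\chi(\mathbb{S}^n) = 1 + (-1)^n$ is even, so the mod-$2$ Euler class vanishes and the Gysin sequence for $S^{n-1}\to ST^\ast\mathbb{S}^n\to \mathbb{S}^n$ collapses---of rank $1$ in degrees $\{0, n-1, n, 2n-1\}$. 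Assembling, one obtains exactly one $\mathbb{Z}_2$ generator at each of the degrees $\ell(n-1)$ and $\ell(n-1) + n$ for $\ell = 0, 1, \ldots, 2m$: the constants cover $\ell = 0$ and the $k$-th iterate covers $\ell = 2k-1, 2k$.

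For $n \geq 3$ these $4m + 2$ degrees are pairwise distinct, and the required rank-$1$ count in each degree is achieved once one shows all Floer differentials vanish; for $n = 2$ they coalesce in exactly the stated pattern, producing rank $1$ at $\{0, 1, 2m+1, 2m+2\}$ and rank $2$ at $\{2, \ldots, 2m\}$ (the total $4m+2$ following by summing). The main technical obstacle is the vanishing of the Floer differential between distinct Morse--Bott families, since naive degree arguments fail: for $n = 3$, consecutive families produce generators at degrees differing by $1$, so differentials are a priori possible. The cleanest way to resolve this is via Viterbo's isomorphism $SH_\ast(D^\ast\mathbb{S}^n; \mathbb{Z}_2) \cong H_\ast(\mathcal{L}\mathbb{S}^n; \mathbb{Z}_2)$ intertwining the slope filtration on the Floer side with the energy filtration on the loop space side; the claim then reduces to the classical computation of $H_\ast(\mathcal{L}\mathbb{S}^n; \mathbb{Z}_2)$ in the truncated range $\{\text{energy} \leq 2\pi m + \varepsilon\}$, which produces precisely the pattern listed above and so forces the Morse--Bott spectral sequence to collapse.
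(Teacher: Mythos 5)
Your proposal is essentially correct but takes a genuinely different route from the paper. The paper argues by induction on $m$ using Seidel's homology long exact sequence \eqref{eq:les}, which relates $HF_k(D^\ast\mathbb{S}^n, 2\pi\ell+\varepsilon)$, $HF_k(D^\ast\mathbb{S}^n, 2\pi(\ell+1)+\varepsilon)$ and $H_{k+\Delta_{\ell+1}}(S^\ast\mathbb{S}^n)$; the shift $\Delta_\ell=-(2\ell-1)$ is quoted from Kwon--van Koert, and the low-degree terms are pinned down by the stabilization observation \eqref{eq:hfsh} that $HF_k$ agrees with $SH_k$ for $k<2\ell$. You instead compute the $E^1$ page of a Morse--Bott spectral sequence directly: you identify the critical manifolds as $\mathbb{S}^n$ and $m$ copies of $S^\ast\mathbb{S}^n$, obtain the homology of the latter from the mod-$2$ Gysin sequence, obtain the index shifts $(2k-1)(n-1)$ from the Morse index theorem, and force the spectral sequence to degenerate by comparing total ranks against filtered loop-space homology through Viterbo's isomorphism. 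Both routes ultimately rest on the same classical input (the loop-space homology of the round sphere): the paper cites it to evaluate $SH_\ast(D^\ast\mathbb{S}^2)$, you cite it through the filtration-preserving Viterbo isomorphism together with Ziller's perfectness theorem. Your version treats all $n$ uniformly, where the paper only writes out $n=2$, and it is geometrically transparent, at the cost of leaning on the heavier filtered Viterbo isomorphism. Two points worth tightening: you should explicitly name Ziller's theorem (the energy functional on $\mathcal{L}\mathbb{S}^n$ is $\mathbb{Z}_2$-perfect), since that is the input forcing the truncated loop-space ranks to match your $E^1$ count; and the slope filtration on the Floer side corresponds to the \emph{length} filtration, not the energy filtration, on the loop-space side.
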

\begin{proof}
We will consider only the case of $\mathbb{S}^2.$ The proof for other cases is analogous and even simpler.\\ 
The Reeb flow on $S^\ast\mathbb{S}^2:=\partial D^\ast\mathbb{S}^2$ is periodic with minimal period equal to $2\pi.$ Hence, there exists a homology long exact sequence
\begin{equation}\label{eq:les}
\begin{tikzcd}[column sep=small]
\cdots \arrow{r} & HF_k(D^\ast\mathbb{S}^2,2\pi\ell+\varepsilon) \arrow{r}
& HF_k(D^\ast\mathbb{S}^2, 2\pi(\ell+1) + \varepsilon) \arrow{r}
\arrow[draw=none]{d}[name=Z, shape=coordinate]{}
& H_{k+\Delta_{\ell+1}} (S^\ast\mathbb{S}^2) \arrow[rounded corners,
to path={ -- ([xshift=2ex]\tikztostart.east)
|- (Z) [near end]\tikztonodes
-| ([xshift=-2ex]\tikztotarget.west)
-- (\tikztotarget)}]
{dll} \\
 \arrow[draw=none]{r} & HF_{k-1}(D^\ast\mathbb{S}^2,2\pi\ell+\varepsilon)\arrow{r}
& \cdots,
\end{tikzcd}
\end{equation}
where $\ell\in \mathbb{N}\cup \{0\},$ and $\Delta_{\ell}$ is a shift in grading \cite[Lemma~3.6]{Seidel2014}. The shift $\Delta_\ell$ is equal to $-(2\ell-1)$ \cite[Proposition~5.12]{Kwon&vanKoert}. The long exact sequence \eqref{eq:les} implies
\begin{equation*}
HF_k(D^\ast\mathbb{S}^2,2\pi\ell+ \varepsilon)\overset{\cong}{\longrightarrow} HF_k(D^\ast\mathbb{S}^2,2\pi(\ell+1)+ \varepsilon),
\end{equation*}
for $k<2\ell.$ Therefore
\begin{equation}\label{eq:hfsh}
HF_k(D^\ast\mathbb{S}^2,2\pi\ell +\varepsilon)\cong SH_k(D^\ast\mathbb{S}^2),\quad\text{for } k<2\ell.
\end{equation} 
We prove the proposition by induction. Using \eqref{eq:les} with $\ell=0,$ we get
\begin{align*}
&\dim HF_2(D^\ast\mathbb{S}^2,2\pi+\varepsilon )=\dim HF_3(D^\ast\mathbb{S}^2, 2\pi+ \varepsilon)+ 1,\\
& HF_4(D^\ast\mathbb{S}^2, 2\pi+ \varepsilon)\cong \mathbb{Z}_2\\
& HF_k(D^\ast\mathbb{S}^2, 2\pi+ \varepsilon)=0,\quad\text{for }k\not\in\{0,1,2,3,4\},\\
& \dim HF_3(D^\ast\mathbb{S}^2, 2\pi+ \varepsilon)\leqslant 1.
\end{align*}
Consequently,
\begin{equation}\label{eq:ineq}
\dim HF_2(D^\ast\mathbb{S}^2, 2\pi+ \varepsilon)\leqslant 2. 
\end{equation}
The equation \eqref{eq:hfsh} implies
\begin{align*}
& HF_0(D^\ast\mathbb{S}^2,2\pi +\varepsilon)\cong SH_0(D^\ast\mathbb{S}^2)\cong \mathbb{Z}_2,\\
& HF_1(D^\ast\mathbb{S}^2,2\pi +\varepsilon)\cong SH_1(D^\ast\mathbb{S}^2)\cong\mathbb{Z}_2,\\
& HF_2(D^\ast\mathbb{S}^2,4\pi +\varepsilon)\cong SH_2(D^\ast\mathbb{S}^2)\cong\mathbb{Z}_2^2. 
\end{align*}
(The computation of the symplectic homology for $D^\ast\mathbb{S}^2$ can be done by using \cite{Viterbo} and \cite[page 21]{MR0649625}.)
Consider the segment
\begin{center}
\begin{tikzcd}
\cdots\arrow{r}& HF_2(D^\ast\mathbb{S}^2,2\pi+\varepsilon) \arrow{r}& HF_2(D^\ast\mathbb{S}^2,4\pi+\varepsilon)\arrow{r} &0
\end{tikzcd} 
\end{center}
of the long exact sequence \eqref{eq:les} with $\ell=1.$ It implies 
\[ \dim HF_2(D^\ast\mathbb{S}^2,2\pi +\varepsilon)\geqslant\dim HF_2(D^\ast\mathbb{S}^2,4\pi +\varepsilon)=2. \]
Therefore (see \eqref{eq:ineq})
\[ HF_2(D^\ast\mathbb{S}^2,2\pi +\varepsilon)\cong \mathbb{Z}_2^2\quad\text{and}\quad HF_3(D^\ast\mathbb{S}^2,2\pi +\varepsilon)\cong \mathbb{Z}_2 \]
This proves the basis of the induction. Assume now the claim holds for $1,\ldots, m.$ The groups $HF_k(D^\ast\mathbb{S}^2,2\pi(m+1)+\varepsilon)$ for $k\in\{0,\ldots, 2m-1\}\cup\{2m+4\}$ can be computed directly from \eqref{eq:les} with $\ell=m.$ The isomorphism $\eqref{eq:hfsh}$ computes the groups $HF_{2m}(\cdots)$ and $HF_{2m+1}(\cdots).$ Finally, the groups $HF_{2m+2}(\cdots)$ and $HF_{2m+3}(\cdots)$ are computed in the same way as $HF_2(D^\ast\mathbb{S}^2,2\pi+ \varepsilon)$ and $HF_3(D^\ast\mathbb{S}^2,2\pi+ \varepsilon)$ in the basis of the induction. (We are not going to repeat the argument here.) This finishes the proof.
\end{proof}

\begin{proof}[Proof of Theorem~\ref{thm:taukappa}]
By the Weinstein neighbourhood theorem, there exists a neighbourhood $V$ of $L$ in $W$ that is symplectomorphic to the disk-cotangent bundle $D^\ast_\rho\mathbb{S}^n$ for small enough radius $\rho\in(0,\infty).$ The manifold $V$ is not, in general, a Liouville domain  when considered with the 1-form $\lambda.$ Namely, the Liouville vector field $X_\lambda$ might not be pointing out on the boundary $\partial V.$ There is, however, a 1-form $\lambda_0$ furnished by the canonical Liouville form on $D^\ast_\rho\mathbb{S}^n$ that makes $V$ into a Liouville domain. The restriction $\left.(\lambda-\lambda_0)\right|_{L}= \left.\lambda\right|_L$ is an exact form because $L$ is an exact Lagrangian submanifold. This, together with the inclusion $L\hookrightarrow V$ being homotopy equivalence, implies that there exists a function $f:V\to \mathbb{R}$ such that $\lambda-\lambda_0= df.$ Let $\chi: V\to [0,1]$ be a function equal to 0 on $V^{\frac{1}{3}}$ and to 1 on the complement of $V ^{\frac{2}{3}}.$ Consider the Liouville form
\[ \lambda_1= \lambda_0 + d(\chi f). \] 
Note that $\lambda_1$ is equal to $\lambda_0$ on $V^{\frac{1}{3}}$ and can be extended by $\lambda$ on the complement of $V.$ Without loss of generality, assume that $\tau_L$ is compactly supported in the interior of $V^{\frac{1}{4}}.$  Theorem~\ref{thm:ambient} implies
\begin{equation}\label{eq:vlambda1W}
\kappa(V^{\frac{1}{4}}, \lambda_1,\tau_L^2)= \kappa(W,\lambda_1,\tau_L^2).
\end{equation} 
The Liouville domain $(V^{\frac{1}{4}}, \lambda_1)$ is isomorphic to the disk-cotangent bundle $D^\ast_\varepsilon \mathbb{S}^n$ for some radius $\varepsilon$ because $\lambda_1$ is equal to $\lambda_0$ on $V^{\frac{1}{4}}.$ Besides, the symplectomorphism $\tau^2_L$ is symplectically isotopic to the fibered Dehn twist. Hence, due to Proposition~\ref{pro:sphereslopes},
\begin{equation}\label{eq:vlambda1=4} \kappa(V^{\frac{1}{4}}, \lambda_1,\tau_L^2)= 4. 
\end{equation}
Finally, the 1-form $\lambda-\lambda_1$ is, by construction, the exterior derivative of a function $W\to\mathbb{R}$ that is equal to 0 near the boundary. Hence Lemma~\ref{lem:changinglambda}, \eqref{eq:vlambda1W}, and \eqref{eq:vlambda1=4} finish the proof. 
\end{proof}

\begin{proof}[Proof of Corollary~\ref{cor:tauinf}]
The corollary follows form Theorem~\ref{thm:ambient} and Theorem~\ref{thm:taukappa}.
\end{proof}
\subsection{An application to closed geodesics}

\begin{definition}
The \textbf{visible rank} $r(W,a)$ of a Liouville domain $W$ and an admissible slope $a$ is defined to be
\[r(W,a):=\dim_{\mathbb{Z}_2}\iota\left(HF(W,\OP{id},a)\right),\]
where 
\[\iota\::\: HF_\ast(W,\OP{id},a)\to HF_\ast(W,\OP{id},\infty)=SH_\ast(W;\mathbb{Z}_2)\]
is the natural morphism.
\end{definition}

\begin{lemma}\label{lem:transferiso}
Let $W_1$, $W_2$ be as in Section~\ref{sec:notation}. Assume there exists $r\in(0,1)$ such that $W_2^r\subset W_1.$ Then, the transfer morphism
\begin{equation}\label{eq:trensferiso}
SH(W_2)=HF(W_2,\infty)\to HF(W_1,\infty)=SH(W_1) 
\end{equation}
is an isomorphism. 
\end{lemma}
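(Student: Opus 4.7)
The plan is to use the Liouville flow to wedge the inclusion $W_1\hookrightarrow W_2$ between two rescaled copies of itself, and then finish with a two-out-of-three argument applied to the resulting chain of transfer morphisms on symplectic homology.

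For the setup, the Liouville flow $\psi^\lambda_t$ on $\widehat{W}_2$ satisfies $\psi^\lambda_{\ln r}(W_2)=W_2^r$. Writing $W_1':=\psi^\lambda_{\ln r}(W_1)$, the hypothesis $W_2^r\subset W_1$ transported by $\psi^\lambda_{\ln r}$ gives $W_2^{r^2}\subset W_1'$. This produces a chain of nested codimension-$0$ exact Liouville embeddings
\[ W_2^{r^2}\,\subset\, W_1'\,\subset\, W_2^r\,\subset\, W_1\,\subset\, W_2, \]
in which the inner triple and the outer triple are both obtained from the middle inclusion $W_2^r\subset W_1\subset W_2$ by pulling back under $\psi^\lambda_{\ln r}$.

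Taking transfer morphisms along each inclusion yields a sequence
\[ SH(W_2)\xrightarrow{\,f\,}SH(W_1)\xrightarrow{\,g\,}SH(W_2^r)\xrightarrow{\,f'\,}SH(W_1')\xrightarrow{\,g'\,}SH(W_2^{r^2}). \]
I would then invoke functoriality of the transfer morphism under composition of nested Liouville embeddings, a standard feature of Viterbo's construction in the $\phi=\OP{id}$ case that can be justified in this paper's framework by running the proof of Theorem~\ref{thm:transfermorphism} with a stair-like Hamiltonian having two stair steps rather than one. This identifies $g\circ f$, $f'\circ g$, and $g'\circ f'$ with the transfer morphisms associated to the composite inclusions $W_2^r\subset W_2$, $W_1'\subset W_1$, and $W_2^{r^2}\subset W_2^r$, respectively. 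Each of these three composite inclusions is only a Liouville rescaling of the identity, implemented by the diffeomorphism $\psi^\lambda_{\ln r}$; one therefore expects the corresponding transfer morphism to coincide with the canonical Liouville isomorphism on symplectic homology, so that $g\circ f$, $f'\circ g$, and $g'\circ f'$ are all isomorphisms.

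A two-out-of-three argument then finishes the proof: $g$ is surjective because $g\circ f$ is an iso, and injective because $f'\circ g$ is an iso, hence an isomorphism; consequently $f=g^{-1}\circ(g\circ f)$ is an isomorphism as well. The step requiring the most care, and the one I expect to be the main obstacle, is the identification of the transfer morphism for a Liouville-rescaling inclusion $W^r\hookrightarrow W$ with the canonical isomorphism $SH(W)\cong SH(W^r)$. The natural strategy is to choose the stair-like Hamiltonian in this case to be invariant (up to the obvious Liouville rescaling) under $\psi^\lambda_{\ln r}$ on the relevant collar, so that the Floer data on both sides of the transfer chain map are intertwined by the rescaling diffeomorphism and the induced chain-level map reduces to the canonical one.
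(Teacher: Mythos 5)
Your proposal is correct and takes essentially the same approach as the paper: a two-out-of-three argument on a chain of nested Liouville domains obtained by Liouville rescaling, using functoriality of the transfer morphism together with the fact that the transfer along a Liouville-rescaling inclusion is the canonical isomorphism. The only notable difference is cosmetic---you insert the auxiliary domain $W_1'=\psi^\lambda_{\ln r}(W_1)$ on the \emph{inside} of the chain, whereas the paper uses $W_1^s\supset W_2$ on the \emph{outside}, and the paper simply cites Lemma~4.16 and the functoriality statements from Gutt's work where you sketch justifications.
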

\begin{proof}
The map
\begin{equation}
\hat{W}_1\to \hat{W}_2\::\:
p\mapsto\left\{\begin{matrix} j (p)&\text{for }p\in W_1\\ (\OP{pr}_1(j(p)),\OP{pr}_2(j(x))\cdot r)&\text{for }p=(x,r)\in\partial W_1\times(0,\infty) \end{matrix} \right.
\end{equation}
is a diffeomorphism that respects the Liouville forms. Here $j:W_1\hookrightarrow W_2$ stands for the inclusion, and $\OP{pr}_1, \OP{pr}_2$ are the projections $\partial W_2\times(0,\infty)\to \partial W_2$ and $\partial W_2\times(0,\infty)\to (0,\infty),$ respectively. We will identify $\hat{W}_1$ and $\hat{W}_2$ via this map. By the assumption, there exists $s\in(0,\infty)$ such that $W_2\subset W_1^s.$ The transfer morphisms
\begin{align*}
SH_\ast(W_1^s)&\to SH_\ast(W_1),\\
SH_\ast(W_2)&\to SH_\ast(W_2^r)
\end{align*} 
are isomorphisms due to Lemma~4.16 in \cite{Gutt2015}. They fit (because of the functoriality of the transfer morphisms \cite{Gutt2015}) into the commutative diagram
\begin{center}
\begin{tikzcd}
SH_\ast(W_1^s)\arrow{r}\arrow[bend right]{r d}&SH_\ast(W_2)\arrow{d}\arrow[bend left]{r d}\\
&SH_\ast(W_1)\arrow{r}& SH_\ast(W_2^r)
\end{tikzcd}
\end{center}
Hence \eqref{eq:trensferiso} is an isomorphism as well.
\end{proof}

\begin{theorem}\label{thm:visiblerankcomparison}
Let $W_1,W_2$ be as in Section~\ref{sec:notation}. Assume there exists $r\in(0,1)$ such that $W_2^r\subset W_1.$ Then,
\[ r(W_2,a)\leqslant r(W_1,a) \]
for all slopes $a\in(0,\infty)$ that are admissible with respect to both $W_1$ and $W_2.$ 
\end{theorem}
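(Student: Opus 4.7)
The plan is to exploit the functoriality of the transfer morphism (the commuting square \eqref{eq:diagram} with $\phi = \OP{id}$) together with Lemma~\ref{lem:transferiso}, which does all the heavy lifting. The visible rank has been defined as the dimension of the image of a canonical map, so the natural strategy is to set up a commutative square in which both vertical arrows are the natural maps defining the two visible ranks, both horizontal arrows are transfer morphisms, and the bottom horizontal arrow is an isomorphism.

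Concretely, I would first invoke Theorem~\ref{thm:transfermorphism} with $\phi = \OP{id}$, choosing the four slopes to be $a, a, \infty, \infty$ (which satisfy $a \leqslant a'$, $b \leqslant b'$, $a' \leqslant b'$ with $a' = b' = \infty$). This furnishes the commutative diagram
\[
\begin{tikzcd}
HF_\ast(W_2,\OP{id},a) \arrow{r}{T_a} \arrow{d}{\iota_2} & HF_\ast(W_1,\OP{id},a) \arrow{d}{\iota_1} \\
SH_\ast(W_2) \arrow{r}{T_\infty} & SH_\ast(W_1),
\end{tikzcd}
\]
whose vertical arrows are exactly the maps $\iota$ appearing in the definition of the visible rank, and whose horizontal arrows are transfer morphisms.

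Next, I would apply Lemma~\ref{lem:transferiso} to the hypothesis $W_2^r \subset W_1$: this tells us that the bottom arrow $T_\infty$ is an isomorphism. From commutativity we read off the inclusion
\[
T_\infty\bigl(\iota_2(HF_\ast(W_2,\OP{id},a))\bigr) \;=\; \iota_1\bigl(T_a(HF_\ast(W_2,\OP{id},a))\bigr) \;\subseteq\; \iota_1\bigl(HF_\ast(W_1,\OP{id},a)\bigr).
\]
Because $T_\infty$ is an isomorphism, it preserves dimensions of subspaces, so
\[
r(W_2,a) \;=\; \dim \iota_2(HF_\ast(W_2,\OP{id},a)) \;=\; \dim T_\infty\bigl(\iota_2(HF_\ast(W_2,\OP{id},a))\bigr) \;\leqslant\; r(W_1,a),
\]
which is precisely the inequality claimed.

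There is essentially no serious obstacle left: the genuine geometric content (existence of the transfer morphism, functoriality, and the fact that the transfer map at infinite slope is an isomorphism when one domain is contained in a sufficiently shrunk copy of the other) has already been established in Theorem~\ref{thm:transfermorphism} and Lemma~\ref{lem:transferiso}. The only small point requiring care is checking that all four groups in the square are actually defined, i.e.\ that $a$ is admissible for both $W_1$ and $W_2$ (which is part of the hypothesis) and that $\infty$ is an allowed slope in Theorem~\ref{thm:transfermorphism}, which it is by Proposition~\ref{prop:TMcont} and the remark preceding the second proof of Theorem~\ref{thm:transfermorphism}.
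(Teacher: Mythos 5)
Your argument is correct and takes essentially the same route as the paper: a commutative square of transfer morphisms and continuation maps from Theorem~\ref{thm:transfermorphism}, with Lemma~\ref{lem:transferiso} making the bottom row an isomorphism. The only cosmetic difference is that the paper perturbs the slope on the $W_2$ side to $a-\varepsilon$ (using Lemma~\ref{lem:periodsclosed} and Lemma~2.25 of~\cite{Uljarevic} to see $r(W_2,a)=r(W_2,a-\varepsilon)$) and runs the diagram with the strict-inequality transfer morphism, whereas you invoke the equal-slope transfer morphism directly, which in Definition~\ref{def:TMstep3} is built by the analogous $\varepsilon$-perturbation on the $W_1$ side; the two are interchangeable.
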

\begin{proof}
Since $\partial W_2$ is compact, the set of admissible slopes with respect to $W_2$ is open (see Lemma~\ref{lem:periodsclosed}). Let $\varepsilon>0$ be such that the elements of $[a-\varepsilon,a]$ are all admissible with respect to $W_2.$ Lemma~2.25 in \cite{Uljarevic} implies
\[ r(W_2,a)=r(W_2,a-\varepsilon). \]
Consider the commutative diagram
\begin{center}
\begin{tikzcd}
SH_\ast(W_2)\arrow{r} &SH_\ast(W_1)\\
HF_{\ast}(W_2,a-\varepsilon)\arrow{r}\arrow{u} &H_{\ast}(W_1,a)\arrow{u},
\end{tikzcd}
\end{center}
whose existence follows from Theorem~\ref{thm:transfermorphism}. The upper horizontal arrow of the diagram is an isomorphism due to Lemma~\ref{lem:transferiso}. Hence
\[  r(W_2,a)=r(W_2,a-\varepsilon)\leqslant r(W_1,a).\]
\end{proof}

\begin{corollary}\label{cor:geogeneral}
Let $Q$ be a closed manifold, and let $g_0$ and $g_1$ be two Riemannian metrics on $Q$ such that $g_1\leqslant g_0,$ i.e. $g_1(v,v)\leqslant g_0(v,v)$ for all $v\in TQ.$ Assume
\begin{equation}\label{eq:ineqgeogenera}
r(D^\ast Q,a)>\dim_{\mathbb{Z}_2} H(Q,\mathbb{Z}_2),
\end{equation}
where $D^\ast Q$ is the disk-cotangent bundle of $Q$ with respect to $g_0$ and $a$ is a positive real number that is not a period of some closed $g_0$-geodesic on $Q.$ Then there exists a closed $g_1$-geodesic of length less than $a.$
\end{corollary}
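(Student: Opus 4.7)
The plan is to argue by contrapositive: assume every closed $g_1$-geodesic on $Q$ has length at least $a$, and derive $r(D^\ast Q,a)\leqslant \dim_{\mathbb{Z}_2} H(Q;\mathbb{Z}_2)$, contradicting the hypothesis. I would set $W_2:=D^\ast Q$ (the $g_0$-disk-cotangent bundle) and $W_1:=D^\ast_{g_1}Q$, both equipped with the canonical Liouville 1-form of $T^\ast Q$. The pointwise inequality $g_1\leqslant g_0$ dualizes to $|\alpha|_{g_1^\ast}\geqslant|\alpha|_{g_0^\ast}$ on $T^\ast Q$, so $W_1\subset W_2$ is a codimension-0 inclusion of Liouville domains fitting the conventions of Section~\ref{sec:notation}.

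The first subtlety is to verify the hypothesis of Theorem~\ref{thm:visiblerankcomparison}, namely to produce $r\in(0,1)$ with $W_2^r\subset W_1$. If $g_1\equiv g_0$ this is immediate; otherwise $g_1(v_0,v_0)<g_0(v_0,v_0)$ at some $v_0\in TQ$, which gives $|\alpha|_{g_1^\ast}>|\alpha|_{g_0^\ast}$ at some covector. The scale-invariant ratio $|\alpha|_{g_1^\ast}/|\alpha|_{g_0^\ast}$ is continuous on the compact unit $g_0^\ast$-sphere bundle, so its supremum $C$ is attained and satisfies $C>1$. The resulting uniform bound $|\alpha|_{g_1^\ast}\leqslant C|\alpha|_{g_0^\ast}$ yields $W_2^{1/C}\subset W_1$ with $1/C<1$.

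Next I would control the Floer homology of $W_1$. Since closed Reeb orbits on $\partial W_1$ correspond bijectively to closed $g_1$-geodesics with period equal to length, the contrapositive assumption produces no Reeb period in $(0,a)$; hence every $a'\in(0,a)$ is admissible for $W_1$, and the continuation map $HF(W_1,\varepsilon)\to HF(W_1,a')$ is an isomorphism for small $\varepsilon>0$. Since $HF(W_1,\varepsilon)\cong H_\ast(Q;\mathbb{Z}_2)$ (up to a degree shift, by the standard identification of Floer homology at sub-Reeb slopes with Morse homology of an auxiliary Morse function on $Q$), I obtain $r(W_1,a')\leqslant \dim HF(W_1,a')=\dim H(Q;\mathbb{Z}_2)$.

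Finally, because $a$ is not a $g_0$-period, Lemma~\ref{lem:periodsclosed} together with \cite[Lemma~2.25]{Uljarevic} provides $\eta>0$ such that $[a-\eta,a]$ consists of admissible slopes for $W_2$ and $r(W_2,a-\eta)=r(W_2,a)$. Setting $a':=a-\eta$ (admissible for $W_1$ as well, since all $g_1$-periods are $\geqslant a>a'$), Theorem~\ref{thm:visiblerankcomparison} yields $r(W_2,a')\leqslant r(W_1,a')\leqslant\dim H(Q;\mathbb{Z}_2)$, so $r(W_2,a)\leqslant\dim H(Q;\mathbb{Z}_2)$, contradicting the hypothesis. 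The main delicate point is the compactness argument in the second paragraph, which converts the non-strict hypothesis $g_1\leqslant g_0$ into the strict Liouville-domain inclusion required by Theorem~\ref{thm:visiblerankcomparison}.
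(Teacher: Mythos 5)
Your proof is correct and follows essentially the same route as the paper: argue by contradiction (equivalently, contrapositive), use the pointwise dual of $g_1\leqslant g_0$ to get $W_1\subset W_2$, bound $r(W_1,\cdot)$ by $\dim H(Q;\mathbb{Z}_2)$ via the absence of short Reeb orbits on $\partial W_1$, and invoke Theorem~\ref{thm:visiblerankcomparison}. You are in fact a bit more careful than the paper's own proof in two places: you explicitly verify the hypothesis that $W_2^r\subset W_1$ for some $r\in(0,1)$ by a compactness argument on the unit cosphere bundle, and you pass to a shifted slope $a'=a-\eta$ to guarantee admissibility with respect to $W_1$ even if $a$ itself happens to equal a $g_1$-period.
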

\begin{proof}
Denote $D^\ast Q$ by $W_2$ and the disk-cotangent bundle of $Q$ with respect to $g_1$ by $W_1.$ The condition $g_1\leqslant g_0$ implies (see Lemma~\ref{lem:twometrics} below) $W_1\subset W_2.$

We argue by contradiction, i.e. assume there exists no (non-constant) closed $g_1$-geodesic on $Q$ with the length less than $a.$ Then, the numbers in $(0,a]$ are all admissible with respect to $W_1.$ Hence
\[ r(W_1,a)=r(W_1,\varepsilon)\leqslant \dim HF(W_1,\varepsilon)=\dim H(W_1;\mathbb{Z}_2)=\dim H(Q;\mathbb{Z}_2), \]
where $\varepsilon>0$ is small enough. This, together with \eqref{eq:ineqgeogenera}, implies
\[ r(W_1,a)<r(W_2,a), \]
a relation in contradiction with Theorem~\ref{thm:visiblerankcomparison}.   
\end{proof}

\begin{example}
Let $(\mathbb{S}^n,g_0)$ be the $n$-dimensional sphere with the standard Riemannian metric, and let $\varepsilon\in(0,2\pi).$ The visible rank $r(D^\ast\mathbb{S}^n,2\pi+ \varepsilon)$ is equal to 6 and, therefore, greater than the sum of the Betty numbers of $\mathbb{S}^n.$ Hence any Riemannian metric $g$ on $\mathbb{S}^n$ with $g\leqslant g_0$ has a non-constant closed geodesic of length less than or equal to $2\pi+\varepsilon.$ Since $\varepsilon$ is an arbitrary number in $(0,2\pi),$ $g$ has a non-constant closed geodesic of length less than or equal to $2\pi.$ 
\end{example}

\begin{lemma}\label{lem:twometrics}
Let $Q$ be a manifold, and let $g_0,g_1$ be two Riemann metrics on $Q$ such that $g_1\leqslant g_0.$ Than,
\[ D^\ast Q(g_1)\subset D^\ast Q(g_0),  \]
where $D^\ast Q(g_i),\,i=0,1$ stand for the disk cotangent bundle of $Q$ with respect to $g_i,\,i=0,1.$
\end{lemma}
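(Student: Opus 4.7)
The plan is to reduce the inclusion to the corresponding dual-norm inequality on $T^\ast Q$ and then verify the latter by the variational characterization of the dual norm.

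First I would recall the definitions: for each metric $g_i$ on $Q$, the dual metric $g_i^\ast$ on $T^\ast Q$ is defined fiberwise by the isomorphism $TQ\to T^\ast Q$ induced by $g_i$, and the disk cotangent bundle is
\[ D^\ast Q(g_i)=\{\xi\in T^\ast Q\::\: g_i^\ast(\xi,\xi)\leqslant 1\}. \]
So it suffices to show that, at every point $q\in Q$ and for every $\xi\in T^\ast_qQ$,
\[ g_0^\ast(\xi,\xi)\leqslant g_1^\ast(\xi,\xi). \]
Equivalently, writing $\abs{\xi}_{g_i^\ast}:=\sqrt{g_i^\ast(\xi,\xi)}$, one needs $\abs{\xi}_{g_0^\ast}\leqslant \abs{\xi}_{g_1^\ast}$.

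Second, I would use the variational characterization of the dual norm:
\[ \abs{\xi}_{g_i^\ast}=\sup\bigl\{\,\xi(v)\::\: v\in T_qQ,\; g_i(v,v)\leqslant 1\,\bigr\}. \]
The hypothesis $g_1\leqslant g_0$ gives $g_1(v,v)\leqslant g_0(v,v)$ for all $v$, so every $v$ with $g_0(v,v)\leqslant 1$ also satisfies $g_1(v,v)\leqslant 1$. Hence the $g_1$-unit ball in $T_qQ$ contains the $g_0$-unit ball, and taking suprema of the linear functional $\xi$ over a larger set yields a larger value:
\[ \abs{\xi}_{g_0^\ast}=\sup_{g_0(v,v)\leqslant 1}\xi(v)\leqslant \sup_{g_1(v,v)\leqslant 1}\xi(v)=\abs{\xi}_{g_1^\ast}. \]

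Finally, combining the two steps, if $\xi\in D^\ast Q(g_1)$ then $\abs{\xi}_{g_1^\ast}\leqslant 1$, whence $\abs{\xi}_{g_0^\ast}\leqslant 1$, i.e.\ $\xi\in D^\ast Q(g_0)$. This gives $D^\ast Q(g_1)\subset D^\ast Q(g_0)$. There is no real obstacle here; the only thing to be careful about is the reversal of the inequality when passing from $TQ$ to $T^\ast Q$, which is transparent once the dual norm is written in variational form.
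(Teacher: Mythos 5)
Your proof is correct, but it follows a genuinely different route from the paper. The paper introduces, for a given covector $\alpha$, the metric-dual vectors $v,w\in T_qQ$ defined by $g_0(v,\cdot)=\alpha=g_1(w,\cdot)$, and then verifies $\abs{v}_{g_0}\leqslant\abs{w}_{g_1}$ by an explicit algebraic manipulation: it expands $\abs{w}_{g_1}^2-\abs{v}_{g_0}^2$ using the polarization identity for $g_1$, uses $g_1(w,v)=\alpha(v)=g_0(v,v)$, and reduces the difference to $\abs{v}_{g_0}^2-\abs{v}_{g_1}^2+\abs{w-v}_{g_1}^2\geqslant 0$. You instead bypass the explicit duals and appeal to the variational description of the dual norm, $\abs{\xi}_{g_i^\ast}=\sup\{\xi(v):g_i(v,v)\leqslant 1\}$; the inclusion of unit balls in $T_qQ$ (a direct consequence of $g_1\leqslant g_0$) then immediately reverses into the inequality $\abs{\xi}_{g_0^\ast}\leqslant\abs{\xi}_{g_1^\ast}$ by monotonicity of the supremum. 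Your argument is more conceptual and avoids the polarization computation entirely; the paper's argument is self-contained algebra that never needs to invoke (or prove) the sup-characterization. Both are correct and about equally short; yours makes the mechanism of the ``inequality reversal under duality'' more transparent.
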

\begin{proof}
It is enough to prove that, for every $\alpha\in T^\ast Q,$ the inequality
\[ \abs{v}_0\leqslant\abs{w}_1 \]
holds, where $\abs{\cdot}_i$ is the norm induced by $g_i,\,i=0,1,$ and $v,w$ are unique vectors in $TQ$ such that
\[ g_0(v,\cdot)=\alpha=g_1(w,\cdot). \] 
Using the polarization identity
\[ 2g_1(v,w)= \abs{v}_1+\abs{w}_1-\abs{v-w}_1 \]
and the inequality $\abs{v}_0\geqslant \abs{v}_1,$ we get
\[\begin{split}
\abs{w}^2_1-\abs{v}^2_0 &= \abs{w}^2_1+\abs{v}^2_1-\abs{w-v}^2_1 -\abs{v}^2_1+ \abs{w-v}^2_1- \abs{v}^2_0\\
&=2g_1(w,v)-\abs{v}^2_1+ \abs{w-v}^2_1- \abs{v}^2_0\\
&=2\alpha(v)-\abs{v}^2_1+ \abs{w-v}^2_1- \abs{v}^2_0\\
&=2\abs{v}^2_0-\abs{v}^2_1+ \abs{w-v}^2_1- \abs{v}^2_0\\
&=\abs{v}_0^2-\abs{v}_1^2+ \abs{w-v}_1^2\geqslant 0.
\end{split}\]
\end{proof}

\appendix
\section{Technical lemmas}

\begin{lemma}\label{lem:tehlem}
Let $r_0,\beta_0, \beta_1$ be real numbers and let $\ell,\alpha$ be positive real numbers. Then, the following conditions are equivalent.
\begin{itemize}
\item There exists a convex, strictly increasing function
\[h:[r_0,r_0+\ell]\to\mathbb{R}\]
such that the function
\[\mathbb{R}\to\mathbb{R}:r\mapsto\left\{ \begin{matrix}
\beta_0& r\leqslant r_0,\\
h(r)& r\in[r_0,r_0+\ell],\\
\alpha r+\beta_1& r\geqslant r_0+\ell
\end{matrix} \right.\]
is $C^\infty.$ 
\item The following inequalities hold
\begin{equation}\label{eq:tehlem1}
\begin{split}
&\alpha r_0<\beta_0-\beta_1,\\
&\beta_1-\beta_0+\alpha r_0+\alpha \ell>0.
\end{split}
\end{equation}
\end{itemize} 
\end{lemma}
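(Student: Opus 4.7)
The plan is to prove both implications of the equivalence. Set $I:=\alpha(r_0+\ell)+\beta_1-\beta_0$; the inequalities \eqref{eq:tehlem1} translate into the single condition $0<I<\alpha\ell$.

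For the direction $\Rightarrow$, assume $h$ exists. The $C^\infty$ matching at $r_0$ with the constant piece $\beta_0$ forces $h^{(k)}(r_0)=0$ for all $k\geqslant 1$, in particular $h'(r_0)=0$; matching at $r_0+\ell$ with the affine piece $\alpha r+\beta_1$ forces $h'(r_0+\ell)=\alpha$ (with all higher derivatives vanishing). Convexity gives that $h'$ is non-decreasing, so $h'(r)\in[0,\alpha]$ for every $r$. The fundamental theorem of calculus yields
\[
\int_{r_0}^{r_0+\ell}h'(r)\,dr \;=\; h(r_0+\ell)-h(r_0) \;=\; I.
\]
Strict monotonicity of $h$ prevents $h'$ from vanishing on any subinterval, giving $I>0$; continuity of $h'$ together with $h'(r_0)=0<\alpha$ gives $h'<\alpha$ on a right-neighbourhood of $r_0$, giving $I<\alpha\ell$.

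For the direction $\Leftarrow$, assume $0<I<\alpha\ell$. It suffices to construct a non-decreasing $C^\infty$ function $g:[r_0,r_0+\ell]\to[0,\alpha]$ that is strictly positive on $(r_0,r_0+\ell]$, equals $0$ at $r_0$ and $\alpha$ at $r_0+\ell$ with all derivatives vanishing at both endpoints, and satisfies $\int_{r_0}^{r_0+\ell}g(r)\,dr=I$. Once such a $g$ is in hand, $h(r):=\beta_0+\int_{r_0}^{r}g(s)\,ds$ is strictly increasing (since $g>0$ on the open interval), convex (since $g$ is non-decreasing), and the piecewise function glues $C^\infty$ with the outer pieces because of the flat derivative matching. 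The function $g$ itself is produced by standard smooth-bump techniques: start from a reference transition built out of $e^{-1/t}$-type flats, and then consider a one-parameter family of such transitions, for example convex combinations of two reference transitions whose mass is concentrated near $r_0$ and near $r_0+\ell$ respectively, so that their integrals over $[r_0,r_0+\ell]$ approach $\alpha\ell$ and $0$. The integral depends continuously on the mixing parameter, so by the intermediate value theorem some member of the family has integral exactly $I$.

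The main obstacle is this last step: carrying out the bump construction so that flat $C^\infty$ matching at both endpoints, strict positivity on $(r_0,r_0+\ell]$, and a prescribed integral value in $(0,\alpha\ell)$ all hold simultaneously. Everything else is a direct calculus argument from convexity and the fundamental theorem of calculus.
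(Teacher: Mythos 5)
Your forward direction is complete and matches the paper's: use $C^\infty$ matching to pin $h'(r_0)=0$, $h'(r_0+\ell)=\alpha$, convexity to get $0\leqslant h'\leqslant\alpha$, the fundamental theorem of calculus to identify $\int h'$ with $I=\alpha(r_0+\ell)+\beta_1-\beta_0$, and strict monotonicity plus continuity of $h'$ (with $h'(r_0)=0<\alpha$) to force $0<I<\alpha\ell$. For the backward direction you correctly reduce to producing a non-decreasing $C^\infty$ profile $g=h'$ on $[r_0,r_0+\ell]$ with flat endpoints, $g(r_0)=0$, $g(r_0+\ell)=\alpha$, $g>0$ on $(r_0,r_0+\ell]$, and $\int g=I$, and then integrate once. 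The paper does the same reduction and likewise finishes by an intermediate-value argument on a one-parameter family: it fixes a single reference transition $f_3$ with integral $c_1$ and, depending on whether $I>c_1$ or $I<c_1$, deforms it by a horizontal rescaling or by multiplication against a second rescaled transition, each time verifying the endpoint limits of the integral. Your device --- convex combinations $\lambda g_1+(1-\lambda)g_2$ of two reference transitions whose mass is concentrated near $r_0$ and near $r_0+\ell$ --- is a genuinely simpler family: convexity, flatness, positivity on $(r_0,r_0+\ell]$, and the endpoint values are all manifestly preserved under convex combination, the integral depends affinely on $\lambda$, and one family covers the whole range $(0,\alpha\ell)$ without any case split. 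So the two proofs share the same skeleton (forward direction identical, backward via IVT on a family of $C^\infty$ profiles), but yours replaces the paper's two-case rescaling/product construction with a single linear interpolation, which is cleaner. The only thing you leave unverified is the routine existence of the two concentrated reference transitions with positivity on the open interval; that is standard bump-function bookkeeping and poses no obstacle, but you should say explicitly (as you implicitly rely on) that each $g_i$ can be taken strictly positive on $(r_0,r_0+\ell]$, e.g. by building it from $e^{-1/t}$-type profiles that vanish to infinite order at $r_0$ but are nonzero for $r>r_0$.
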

\begin{proof}
We first assume there exists such a function and prove \eqref{eq:tehlem1}. Since $h$ is increasing, we get
\[\beta_0=h(r_0)<h(r_0+\ell)=\beta_1+\alpha r_0+\alpha\ell,\]
or equivalently
\[\beta_1-\beta_0+\alpha r_0+\alpha \ell>0.\]
The Newton-Leibniz formula, together with $h$ being convex, implies
\[\alpha r_0+\alpha\ell+\beta_1-\beta_0= h(r_0+\ell)-h(r_0)=\int_{r_0}^{r_0+\ell}h'(r)dr<\int_{r_0}^{r_0+\ell}\alpha dr=\alpha\ell.\]
Hence \eqref{eq:tehlem1}. (The function $h'$ is continuous and not constant. Therefore the strict inequality holds.)

Now, we construct $h$ under assumption~\eqref{eq:tehlem1}. Consider the famous cut-off function
\[\mathbb{R}\to \mathbb{R}\::\:r\mapsto\left\{ \begin{matrix} e^{-\frac{1}{1-r^2}}& \abs{r}<1\\ 0&\text{otherwise.}\end{matrix} \right.\]
Denote by $f_2:\mathbb{R}\to\mathbb{R}$ the function given by
\[f_2(r):=\frac{\alpha}{c\ell}f_1\left(\frac{2r-2r_0-\ell}{\ell}\right),\]
where 
\[c:=\int_\mathbb{R}f_1(r)dr.\]
It is bigger than 0 on $(r_0,r_0+\ell)$ and equal to 0 elsewhere. In addition $\int_{\mathbb{R}}f_2(r)dr=\alpha.$ Hence the function
\[f_3:\mathbb{R}\to\mathbb{R}\::\: r\mapsto \int_{r_0}^rf_2(t)dt\]
is equal to 0 on $(-\infty,r_0],$ strictly increasing on $(r_0,r_0+\ell),$ and equal to $\alpha$ on $[r_0+\ell,\infty).$

If 
\[\beta_1-\beta_0+r_0\alpha+\ell\alpha=\int_{r_0}^{r_0+\ell} f_3(r)dr=:c_1,\]
then the function 
\[h(r):=\beta_0+\int_{r_0}^r f_3(t)dt\]
satisfies the conditions. Assume $\beta_1-\beta_0+r_0\alpha+\ell\alpha>c_1.$ Let $k_1\geqslant 1$ be a real number such that 
\[\int_{r_0}^{r_0+\ell}f_4(r)dr= \beta_1-\beta_0+r_0\alpha+\ell\alpha,\]
where 
\[f_4(r):=f_3(k_1r-k_1r_0+r_0).\]
Such a number exists because
\[g_1:t\mapsto \int_{r_0}^{r_0+\ell}f_3(tr-tr_0+r_0)dr\]
is a continuous function satisfying
\begin{align*}
&g_1(1)=c_1< \beta_1-\beta_0+r_0\alpha+\ell\alpha,\\
&\lim_{t\to\infty}g(t)=\ell\alpha> \beta_1-\beta_0+r_0\alpha+\ell\alpha.
\end{align*}
The last inequality is due to \eqref{eq:tehlem1}. In this case, we can take $h$ to be
\[h(r):=\beta_0+\int_{r_0}^rf_4(t)dt.\]
Assume now $\beta_1-\beta_0+r_0\alpha+\ell\alpha<c_1.$ Let $\varepsilon\in(0,1]$ and $k_2\in[1,\infty)$ be real numbers such that
\[\int_{r_0}^{r_0+\ell}f_3(r)f_5(r)dr=\beta_1-\beta_0+r_0\alpha+\ell\alpha,\]
where
\[f_5(r):=\varepsilon+ \frac{1-\varepsilon}{\alpha} f_3(k_2r-k_2r_0-k_2\ell+r_0+\ell).\]
The function
\[h(r):=\beta_0+\int_{r_0}^{r_0+\ell}f_3(r)f_5(r)dr\]
satisfies the conditions. In what follows, we show that the numbers $\varepsilon$ and $k_2$ exist. Assume the contrary. Since the function
\[g_2:(0,1]\times[1,\infty:(s,t)\mapsto \int_{r_0}^{r_0+\ell} f_3(r)\left(s+ \frac{1-s}{\alpha} f_3(tr-tr_0-t\ell+r_0+\ell)\right)dr \]
is continuous, $g_2(1,t)=c_1>\beta_1-\beta_0+r_0\alpha+\ell \alpha,$ and the space $(0,1]\times[1,\infty)$ is connected, the inequality
\[g_2(s,t)>\beta_1-\beta_0+r_0\alpha+\ell \alpha\]
holds for all $(s,t)\in (0,1]\times[1,\infty).$ The function $r\mapsto f_3(tr-tr_0-t\ell+r_0+\ell)$
is equal to 0 on $\left(-\infty,r_0+\frac{t-1}{t}\ell \right),$ hence
\[g_2(s,t)\leqslant s c_1 + \frac{\ell}{t} \max f_3\left(s+\frac{\max f_3}{\alpha}\right).\]
Therefore, for $s$ small enough and $t$ large enough, we get
\[g_2(s,t)<\beta_1-\beta_0+r_0\alpha+\ell\alpha\]
(the number on the right-hand side is positive due to \eqref{eq:tehlem1}). This is a contradiction and the proof is finished.
\end{proof}

\begin{lemma}\label{lem:tehlem2}
Let $r_0,\beta_0, \beta_1$ be real numbers and let $\ell,\alpha$ be positive real numbers. Then, the following conditions are equivalent.
\begin{itemize}
\item There exists a concave, strictly increasing function
\[h:[r_0,r_0+\ell]\to\mathbb{R}\]
such that the function
\[\mathbb{R}\to\mathbb{R}:r\mapsto\left\{ \begin{matrix}
\alpha r+\beta_0& r\leqslant r_0,\\
h(r)& r\in[r_0,r_0+\ell],\\
\beta_1& r\geqslant r_0+\ell
\end{matrix} \right.\]
is $C^\infty.$ 
\item The following inequalities hold
\begin{equation}
0<\beta_1-\beta_0-\alpha r_0<\ell \alpha.
\end{equation} 
\end{itemize}
Moreover, given $\alpha_1\in\mathbb{R},$ the inequality 
\[h(r)>\alpha_1 r\]
holds for all $r\in [r_0,r_0+\ell]$ if, and only if, 
\[\alpha r_0+\beta_0>r_0\alpha_1\quad\text{and}\quad \beta_1> \alpha_1(r_0+\ell).\]
\end{lemma}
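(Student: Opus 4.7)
The plan is to mirror the proof of Lemma~\ref{lem:tehlem}, with convexity replaced by concavity. For the forward direction, smoothness of the piecewise extension at the two joining points forces $h'(r_0) = \alpha$ and $h'(r_0+\ell) = 0$; concavity means $h'$ is non-increasing, and strict monotonicity of $h$ forces $h' > 0$ on $[r_0, r_0+\ell)$. Since $h'$ is continuous and non-constant, the pointwise inequalities are strict on the open interval, i.e.\ $0 < h'(r) < \alpha$ for $r \in (r_0, r_0+\ell)$. The Newton--Leibniz formula
\[
\beta_1 - (\alpha r_0 + \beta_0) \;=\; h(r_0+\ell) - h(r_0) \;=\; \int_{r_0}^{r_0+\ell} h'(r)\,dr
\]
then yields both $0 < \beta_1 - \beta_0 - \alpha r_0$ and $\beta_1 - \beta_0 - \alpha r_0 < \alpha \ell$, as required.

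For the converse, I would construct $h$ by prescribing $h'$. Starting from the standard bump function used in Lemma~\ref{lem:tehlem}, build a smooth, strictly decreasing profile $f \colon \mathbb{R} \to [0,\alpha]$ equal to $\alpha$ on $(-\infty,r_0]$, equal to $0$ on $[r_0+\ell,\infty)$, and strictly decreasing on $(r_0,r_0+\ell)$; then set $h(r) := \alpha r_0 + \beta_0 + \int_{r_0}^{r} f(t)\,dt$. The matching condition
\[
\int_{r_0}^{r_0+\ell} f(r)\,dr \;=\; \beta_1 - \beta_0 - \alpha r_0
\]
is to be achieved by varying a one-parameter family of such profiles --- for instance by pushing the transition region toward one endpoint or the other --- so that the corresponding integral sweeps continuously through the open interval $(0,\alpha\ell)$. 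The intermediate value theorem then supplies a member of the family with the required integral, precisely because the target value lies strictly in $(0,\alpha\ell)$ by hypothesis.

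For the ``moreover'' clause, necessity is immediate by evaluating $h(r) > \alpha_1 r$ at the endpoints, using $h(r_0) = \alpha r_0 + \beta_0$ and $h(r_0+\ell) = \beta_1$. Conversely, let $L \colon [r_0, r_0+\ell] \to \mathbb{R}$ denote the affine chord joining $(r_0, \alpha r_0 + \beta_0)$ and $(r_0+\ell, \beta_1)$. By hypothesis the affine function $r \mapsto \alpha_1 r$ is strictly below $L$ at both endpoints; since both are affine, this strict inequality persists on the entire interval. Concavity of $h$ gives $h(r) \geqslant L(r)$ on $[r_0, r_0+\ell]$, and combining the two inequalities yields $h(r) > \alpha_1 r$ throughout the interval.

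The main technical obstacle is the parametric construction of $f$ with prescribed integral in $(0,\alpha\ell)$: one must exhibit a continuous family of admissible profiles whose integrals cover this entire open interval. This is handled in direct analogy with the proof of Lemma~\ref{lem:tehlem}, so the argument essentially reduces to the bookkeeping already carried out there, together with the elementary chord-comparison argument for the ``moreover'' part.
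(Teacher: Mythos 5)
Your proof is correct, but it takes a different route from the paper's. The paper disposes of the first equivalence in one line by a reduction to Lemma~\ref{lem:tehlem} via the reflection $r\mapsto -r$: it observes that $h$ is concave and strictly increasing on $[r_0,r_0+\ell]$ if and only if $\widetilde{h}(r):=-h(-r)$ is convex and strictly increasing on $[-r_0-\ell,-r_0]$, and the gluing data for $\widetilde{h}$ turns exactly into the hypotheses of Lemma~\ref{lem:tehlem}. You instead re-derive the equivalence from scratch, mirroring that earlier proof: for necessity you compute $h'(r_0)=\alpha$, $h'(r_0+\ell)=0$ and integrate; for sufficiency you propose a one-parameter family of decreasing profiles for $h'$ and invoke the intermediate value theorem to hit the prescribed integral in $(0,\alpha\ell)$. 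Your approach is self-contained but duplicates the parametric bookkeeping already done in Lemma~\ref{lem:tehlem}, whereas the paper's reduction sidesteps it entirely. For the ``moreover'' clause, both proofs are the same chord-comparison idea (the paper simply says ``follows from concavity,'' which you spell out correctly via the affine chord $L$). One small imprecision on your side: the claim $h'(r)<\alpha$ on all of $(r_0,r_0+\ell)$ is not forced --- concavity permits $h'\equiv\alpha$ on an initial subinterval --- but this is harmless, since the strict inequality $\int_{r_0}^{r_0+\ell}h'\,dr<\alpha\ell$ follows already from $h'$ being continuous and not identically $\alpha$ (guaranteed by $h'(r_0+\ell)=0$).
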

\begin{proof}
The function $h:[r_0,r_0+\ell]\to\mathbb{R}$ is concave, strictly increasing if, and only if, the function
\[[-r_0-\ell,-r_0]\to\mathbb{R}\::\: r\mapsto -h(-r)\]
is convex and strictly increasing. Hence Lemma~\ref{lem:tehlem} implies the first part of the lemma. The second part follows from the concavity of the function $h.$
\end{proof}

\begin{lemma}\label{lem:periodsclosed}
Let $M$ be a closed contact manifold with contact form $\alpha.$ Then, the set $S(M,\alpha)$ of all periods of periodic Reeb orbits on $(M,\alpha)$ is a closed subset of $\mathbb{R}.$
\end{lemma}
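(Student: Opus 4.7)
The plan is to show that $S(M,\alpha)$ contains all its limit points by a direct compactness argument using the Reeb flow. Let $T_n\in S(M,\alpha)$ be any sequence converging to some $T\in\mathbb{R}$, and let $\gamma_n:\mathbb{R}\to M$ be a periodic Reeb orbit of period $T_n$. Pick a base point $x_n:=\gamma_n(0)$, so that $\phi^R_{T_n}(x_n)=x_n$, where $\phi^R_t$ denotes the (smooth) flow of the Reeb vector field $R$. Since $M$ is compact, the sequence $\{x_n\}$ admits a subsequence, which I will still denote by $x_n$, converging to some $x\in M$.

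Next I would invoke continuity of the flow map $(t,p)\mapsto \phi^R_t(p)$. Because $T_n\to T$ and $x_n\to x$, we get
\[
\phi^R_T(x)=\lim_{n\to\infty}\phi^R_{T_n}(x_n)=\lim_{n\to\infty}x_n=x,
\]
so $x$ is a fixed point of $\phi^R_T$, and hence the orbit $t\mapsto \phi^R_t(x)$ is closed with $T$ being one of its return times.

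The point that requires care, and which I regard as the main obstacle, is ruling out the degenerate case $T=0$, because a constant loop is not considered a periodic Reeb orbit. Here I would use the standard fact that on a closed contact manifold the Reeb vector field $R$ is nowhere vanishing, so there is a uniform lower bound $\epsilon>0$ on the periods of closed Reeb orbits; this can be proved, for instance, by covering $M$ with finitely many flow boxes for $R$ and observing that no trajectory can return to its starting point in time less than the minimal transit time through such a box. Consequently $T_n\geqslant \epsilon$ for all $n$, forcing $T\geqslant \epsilon>0$.

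Combining the two steps, the limit orbit through $x$ is non-constant (as $R(x)\neq 0$) and closes up at time $T>0$, so $T\in S(M,\alpha)$. This shows $S(M,\alpha)$ is sequentially closed in $\mathbb{R}$, hence closed, which finishes the proof.
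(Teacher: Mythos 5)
Your proof is correct, but it follows a different route from the one in the paper. The paper avoids extracting subsequences altogether: it considers the continuous map $f(x,t)=d(x,\sigma_t(x))$ on $M\times\mathbb{R}$ (where $\sigma_t$ is the Reeb flow and $d$ a metric), notes that $f^{-1}(\{0\})$ is closed, and then uses the fact that the projection $\operatorname{pr}_2\colon M\times\mathbb{R}\to\mathbb{R}$ is a closed map because $M$ is compact; the image of $f^{-1}(\{0\})$ is then a closed set of return times. That argument is shorter and does not require a separate lower bound on periods; on the other hand, the set it actually produces is the full set of return times, which includes $0$ (and negative times), so the identification with $S(M,\alpha)$ is a little loose --- harmless for the intended application, which only concerns positive slopes. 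Your sequential argument is more hands-on: compactness of $M$ gives a convergent subsequence of base points, joint continuity of the flow passes the fixed-point relation to the limit, and then you must separately rule out the degenerate limit $T=0$. You do this correctly via the standard uniform positive lower bound on periods of a nowhere-vanishing vector field on a compact manifold (the flow-box covering argument); this is precisely the extra ingredient your approach requires and the paper's projection trick sidesteps. Both proofs are valid; yours is more explicit about the $T=0$ issue, the paper's is slicker and uses the closed-map property of projections from a compact factor.
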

\begin{proof}
Let $d:M\times M\to[0,\infty)$ be a metric on $M.$ Denote by $\sigma_t:M\to M$ the Reeb flow of $(M,\alpha).$ Consider the function
\[ f\::\:M\times\mathbb{R}\to [0,\infty) \::\:(x,t)\mapsto d(x,\sigma_t(x)).\]
The set $f^{-1}(\{0\})$ is a closed subset of $M\times\mathbb{R},$ because $f$ is a continuous map. Since $M$ is compact, the projection
\[ \OP{pr}_2\::\:M\times\mathbb{R} \to \mathbb{R} \]
is a closed map. Hence
\[ \OP{pr}_2\left( f^{-1}(\{0\}) \right)=S(M,\alpha) \]
is a closed subset of $\mathbb{R}.$
\end{proof}
\printbibliography
\end{document}